\numberwithin{equation}{section}
\def\namedlabel#1#2{\begingroup
    #2%
    \def\@currentlabel{#2}%
    \phantomsection\label{#1}\endgroup
}
\newcommand{\R}{\mathbb R}
\renewcommand{\Re}{\mathrm{Re}}
\newcommand\Lip{\textrm{Lip}}
\newcommand\Dini{\rm Din}
\newcommand\Hloc{H_{\textrm{loc}}}
\newtheorem{theorem}{Theorem}[section]
\newtheorem{proposition}[theorem]{Proposition}
\newtheorem{corollary}[theorem]{Corollary}
\newtheorem{lemma}[theorem]{Lemma}
\newtheorem{remark}[theorem]{Remark}
\title{Uniformity in the Fourier inversion formula with applications to Laplace transforms}
\author{Joannis Alexopoulos\thanks{Department of Mathematics, Karlsruhe Institute of Technology, Englerstra\ss e 2, 76131 Karlsruhe, Germany;\\ \texttt{joannis.alexopoulos@kit.edu}}}
\begin{document}
\maketitle
\begin{abstract}
We systematically find conditions which yield locally uniform convergence in the Fourier inversion formula in one and higher dimensions. We apply the gained knowledge to the complex inversion formula of the Laplace transform to extend known results for Banach space-valued functions and, specifically, for $C_0$-semigroups. 
\paragraph*{Keywords.} Banach space-valued functions; Fourier transform; Laplace transform; complex inversion formula; uniform convergence; local H\"older-continuity; strongly continuous semigroups; Favard spaces\\
\textbf{Mathematics Subject Classification (2020).} 44A10; 47D06; 26A16
\end{abstract}
\section{Introduction}

We fix a Banach space $X$ for the entire manuscript.

\bigskip

\noindent For an exponentially bounded function $F:\R_+:=[0,\infty) \rightarrow X$ with exponential growth bound $\omega_0 \in \R$, i.e. there exists $C>0$ such that $||F(t)|| \leq Ce^{\omega_0 t}$ for all $t \in \R_+$, the Laplace transform  is given by
\begin{align}
\label{laplace}
\mathcal{L}(F)(\lambda) := \int_0^\infty e^{- \lambda s} F(s) \,ds \text{, } \quad \Re( \lambda) >\omega_0.
\end{align}
For many decades, mathematicians have been interested in how we can formally and qualitatively invert (\ref{laplace}). The importance of this question is not only raised by abstract curiosity. Nowadays, the Laplace transform is a standard and important tool to analyze differential equations: having properties of the transformed solution of a differential equation at hand, the validity of a suitable inversion formula may give further information about the solution itself. A prominent realization of this principle is the Hille-Yosida theorem in the context of linear evolution equations.

Let $\omega>\omega_0$. The complex inversion formula is defined by
\begin{align}
\label{inverse_def}
 \lim_{R \rightarrow \infty} \frac{1}{2\pi i} \int_{\omega - i R}^{\omega + i R} e^{\lambda t} \mathcal{L}(F)(\lambda) \,d\lambda
\end{align}
for $t \in \R_+$. We are interested under which regularity or localization conditions on $F$, we can guarantee
\begin{align}
\label{formula_of_interest}
F(t) = \lim_{R \rightarrow \infty} \frac{1}{2\pi i}  \int_{\omega - i R}^{\omega + i R} e^{\lambda t} \mathcal{L}(F)(\lambda) \,d\lambda \textrm{ locally uniformly in } t\in \R_+.
\end{align}
In  this paper, we consider general Banach space-valued functions and prove

\begin{proposition}
\label{prop_h_loc}
    If $F \in \Hloc(\R_+;X)$\footnote{For the definitions of function spaces see Appendix \ref{appendix}}, with  $F(0) = 0$, has exponential growth bound $\omega_0 \in \R$, then
\begin{align*}
F(t) =  \lim\limits_{R\rightarrow \infty} \frac{1}{2\pi i} \int\limits_{\omega - i R}^{\omega+iR} e^{\lambda t} \mathcal{L}(s)(\lambda) \,d\lambda \textrm{ locally uniformly in } t\in \R_+,
\end{align*}
for any $\omega>\omega_0$. 
\end{proposition}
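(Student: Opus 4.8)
The plan is to convert the complex inversion formula into a Fourier inversion formula and then invoke the local-uniform Fourier inversion results that form the backbone of this paper. First I would parametrise the vertical contour by $\lambda = \omega + i\xi$ with $\xi \in [-R,R]$, so that $d\lambda = i\,d\xi$ and the integral in \eqref{inverse_def} becomes
\[
\frac{1}{2\pi i} \int_{\omega - iR}^{\omega + iR} e^{\lambda t}\,\mathcal L(F)(\lambda)\,d\lambda
= e^{\omega t}\cdot \frac{1}{2\pi}\int_{-R}^{R} e^{i\xi t}\,\mathcal L(F)(\omega + i\xi)\,d\xi.
\]
The point of this rewriting is that $\mathcal L(F)(\omega + i\xi)$ is itself a Fourier transform: introducing the auxiliary function $g:\R \to X$ with $g(s) = e^{-\omega s}F(s)$ for $s \geq 0$ and $g(s) = 0$ for $s < 0$, the definition \eqref{laplace} gives $\mathcal L(F)(\omega + i\xi) = \int_0^\infty e^{-\omega s}F(s)\,e^{-i\xi s}\,ds = \widehat g(\xi)$. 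Thus the bracketed factor above is precisely the symmetric partial Fourier inversion integral of $g$ evaluated at $t$, and the whole task reduces to showing that these partial integrals converge to $g$ locally uniformly on $\R$.

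To apply the Fourier inversion machinery I must verify that $g$ lies in the admissible class. The exponential growth bound, together with $\omega > \omega_0$, yields $\|g(s)\| \leq C e^{-(\omega - \omega_0)s}$ for $s \geq 0$, so $g \in L^1(\R;X)$ (and decays fast enough for all auxiliary estimates the inversion theorem may require). On $(-\infty,0)$ and on $(0,\infty)$ the function $g$ inherits local Hölder regularity trivially and from $F \in \Hloc(\R_+;X)$ respectively, multiplication by the smooth factor $e^{-\omega s}$ causing no harm on compact sets. With this in hand I would invoke our local-uniform Fourier inversion theorem for $g$ to obtain
\[
g(t) = \lim_{R\to\infty}\frac{1}{2\pi}\int_{-R}^{R} e^{i\xi t}\,\widehat g(\xi)\,d\xi
\quad\text{locally uniformly in } t \in \R,
\]
and a fortiori locally uniformly on $\R_+$. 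Multiplying by $e^{\omega t}$, which is bounded on compact subsets of $\R_+$ and therefore preserves local uniform convergence, returns $F(t) = e^{\omega t} g(t)$ as the claimed limit.

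I expect the main obstacle to be the gluing of $g$ at the origin, which is exactly what the hypothesis $F(0)=0$ is there to resolve. Without it, $g$ would have a jump at $s=0$ and the symmetric partial integrals could at best converge to the average of the one-sided limits, destroying both the correct value and the local uniformity near $t=0$. The condition $F(0)=0$ makes $g$ continuous across the junction, and combined with local Hölder continuity of $F$ at $0$ it places $g$ in $\Hloc(\R;X)$ globally, including at $s=0$; this is the decisive regularity input that lets the Fourier inversion theorem deliver genuine locally uniform convergence on all compact subsets of $\R_+$, in particular on neighbourhoods of $t=0$. The remaining steps — the contour parametrisation, the identification $\mathcal L(F)(\omega+i\xi)=\widehat g(\xi)$, and the passage of local uniform convergence through multiplication by $e^{\omega t}$ — are routine once this regularity at the origin is secured.
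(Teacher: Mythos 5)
Your proposal is correct and follows essentially the same route as the paper: it defines the exponentially weighted zero-extension $g(s)=e^{-\omega s}F(s)\mathbf{1}_{s\geq 0}$ (the paper's $\tilde F$ from the lemma in Section \ref{identify}), identifies the contour integral with the partial Fourier inversion of $g$, checks that $g$ is $L^1$ with the required localization and is locally H\"older across the origin thanks to $F(0)=0$, and then invokes Proposition \ref{proposition_uniform_with_holder}. The paper's proof is exactly this, packaged as Lemma \ref{identify} plus the verification of (\ref{assumption_integrability}), so there is nothing further to add.
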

Before discussing the proof of Proposition \ref{prop_h_loc}, we give a short overview of existing results providing answers to (\ref{formula_of_interest}) in the current literature. 
For strongly continuous functions, the question \textit{under which conditions the complex inversion formula holds and in which sense} was posed more generally and was partially answered by Markus Haase in \cite{haase}. Haase considers convolutions of a strongly continuous and a locally integrable scalar function and obtains (\ref{formula_of_interest}) with convergence in operator norm. Another recent result can be found in \cite[Appendix A]{ours}. The authors show the validity of Laplace inversion for convolutions of two $C_0$-semigroups as a direct consequence of the upcoming Proposition \ref{know_result}. This observation is then exploited to identify and control the leading-order terms in the Neumann series of high-frequency components of a $C_0$-semigroup.
For merely continuous functions, we cite the result \cite[Theorem 4.2.21 b)]{arendt} which requires introducing the (weaker) Ces\`{a}ro limit: 
\begin{align*}
\textrm{ C} - \lim\limits_{R \rightarrow \infty} \int\limits_{\omega - i R}^{\omega + i R} e^{\lambda t} \mathcal{L}(F)(\lambda) \,d\lambda := \lim\limits_{R \rightarrow \infty} \frac{1}{R} \int_0^R \int\limits_{\omega - i r}^{\omega + i r} e^{\lambda t} \mathcal{L}(F)(\lambda) \,d\lambda \,dr,\quad t\in \R_+.
\end{align*}
\begin{proposition}
\label{cesaro_result}
For $F \in C(\R_+;X)$, $F(0) = 0$, with exponential growth bound $\omega_0$, we have $$F(t) =  \mathrm{ C} -  \frac{1}{2\pi i} \lim\limits_{R \rightarrow \infty} \int\limits_{\omega - i R}^{\omega + i R} e^{\lambda t} \mathcal{L}(F)(\lambda) \,d\lambda \textrm{ locally uniformly in } t \in \R_+,$$ for any $\omega>\omega_0$.
\end{proposition}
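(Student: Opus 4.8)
The plan is to recast the Cesàro-averaged contour integral as a Fourier--Fejér mean and then exploit that the Fejér kernel is an approximate identity. First I would parametrise the vertical line by $\lambda=\omega+is$ and introduce $G\colon\R\to X$ given by $G(u)=e^{-\omega u}F(u)$ for $u\geq 0$ and $G(u)=0$ for $u<0$. From $\|F(u)\|\leq Ce^{\omega_0 u}$ and $\omega>\omega_0$ one gets $\|G(u)\|\leq Ce^{(\omega_0-\omega)u}$, so $G\in L^1(\R;X)$; and since $F$ is continuous with $F(0)=0$, the one-sided limits of $G$ at the origin both vanish, whence $G\in C_0(\R;X)$ and $G$ is uniformly continuous on $\R$. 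A direct calculation shows $\mathcal L(F)(\omega+is)=\hat G(s):=\int_\R e^{-ius}G(u)\,du$, so that the truncated inversion integral at level $r$ equals $e^{\omega t}\,\tfrac1{2\pi}\int_{-r}^{r}e^{ist}\hat G(s)\,ds$.

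Next I would perform the Cesàro average in $r$. Swapping the $r$- and $s$-integrations (legitimate because $\|\hat G(s)\|\leq\|G\|_{L^1}$ and the triangle $\{0\leq r\leq R,\ |s|\leq r\}$ has finite area), one obtains the replacement of the sharp cut-off by the triangular Fejér weight,
\begin{align*}
\frac1R\int_0^R\int_{-r}^{r}e^{ist}\hat G(s)\,ds\,dr=\int_{-R}^{R}\Bigl(1-\tfrac{|s|}{R}\Bigr)e^{ist}\hat G(s)\,ds.
\end{align*}
Inserting the definition of $\hat G$ and applying Fubini once more (now using $\int_{-R}^{R}\!\int_\R\|G(u)\|\,du\,ds<\infty$), the Cesàro mean is seen to equal $e^{\omega t}\,(K_R*G)(t)$ with the Fejér kernel $K_R(x)=\tfrac{1}{2\pi}\int_{-R}^{R}(1-|s|/R)e^{isx}\,ds=\tfrac{2}{\pi R}\,\bigl(\sin(Rx/2)/x\bigr)^2$.

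The last step is a standard approximate-identity estimate. Since $K_R\geq 0$, $\int_\R K_R=1$ for all $R$, and the mass of $K_R$ concentrates at $0$ as $R\to\infty$, writing $(K_R*G)(t)-G(t)=\int_\R K_R(y)\bigl[G(t-y)-G(t)\bigr]\,dy$ and splitting the integral at $|y|=\delta$ gives, via the uniform continuity and boundedness of $G$, that $\sup_{t\in\R}\|(K_R*G)(t)-G(t)\|\to 0$. Multiplying by $e^{\omega t}$, which is bounded on each interval $[0,T]$, then yields $e^{\omega t}(K_R*G)(t)\to e^{\omega t}G(t)=F(t)$ uniformly on $[0,T]$, i.e. the asserted local uniform convergence.

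I expect the only genuine subtlety to be the hypothesis $F(0)=0$: it is precisely what removes the jump of $G$ at the origin. If $F(0)\neq 0$, the Fejér mean would converge there to $\tfrac12 F(0)$ instead of $F(0)$, destroying uniform convergence up to $t=0$. Once it is checked that $G\in C_0(\R;X)$, the scalar Fejér theory transfers verbatim to the vector-valued case, the Banach-space structure entering only through the norm in the approximate-identity bound; no new analytic difficulty arises.
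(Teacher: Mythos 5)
Your proof is correct. The paper does not prove Proposition \ref{cesaro_result} at all --- it is quoted from \cite[Theorem 4.2.21 b)]{arendt} --- and your argument (rewriting the truncated contour integrals as partial Fourier inversions of $G(u)=e^{-\omega u}F(u)\mathbf{1}_{[0,\infty)}(u)$, averaging in $r$ to produce the Fej\'er kernel $K_R$, and then using that $K_R$ is a positive approximate identity acting on the bounded, uniformly continuous function $G\in L^1\cap C_0(\R;X)$) is precisely the standard Fej\'er-summability proof underlying that cited result, including the correct identification of the role of $F(0)=0$ in removing the jump of $G$ at the origin.
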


The next result is, to the author's knowledge, the most general result providing (\ref{formula_of_interest}) in the current literature.
\begin{proposition}
\label{know_result}
If $F \in \mathrm{Lip}(\R_+;X)$, $F(0) = 0$, then
\begin{align*}
F(t) = \lim\limits_{R\rightarrow \infty} \frac{1}{2\pi i} \int\limits_{\omega - i R}^{w+iR} e^{\lambda t} \mathcal{L}(s)(\lambda) \,d\lambda \textrm{ locally uniformly in } t\in \R_+,
\end{align*}
 for any $\omega>0$. 
\end{proposition}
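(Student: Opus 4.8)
The plan is to recast the complex inversion contour integral as a symmetric partial Fourier integral and then feed it into the locally uniform Fourier inversion machinery that this paper develops. Parametrizing the vertical line by $\lambda = \omega + i\xi$, so that $d\lambda = i\,d\xi$, one rewrites
\begin{align*}
\frac{1}{2\pi i} \int_{\omega - iR}^{\omega + iR} e^{\lambda t}\, \mathcal{L}(F)(\lambda)\,d\lambda = e^{\omega t}\,\frac{1}{2\pi}\int_{-R}^{R} e^{i\xi t}\, \mathcal{L}(F)(\omega + i\xi)\,d\xi .
\end{align*}
Introducing $g(s) := e^{-\omega s} F(s)$ for $s \ge 0$ and $g(s) := 0$ for $s < 0$, the very definition of the Laplace transform gives $\mathcal{L}(F)(\omega + i\xi) = \int_{\R} e^{-i\xi s} g(s)\,ds =: \widehat{g}(\xi)$, so that the right-hand side is exactly $e^{\omega t}$ times the symmetric (Dirichlet-type) partial Fourier integral of $g$ evaluated at $t$.

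First I would verify that $g$ carries the regularity and integrability needed for the inversion theorem. Since $F$ is Lipschitz with $F(0) = 0$ we have $\|F(s)\| \le L s$, whence $\|g(s)\| \le L s\, e^{-\omega s}$; as $\omega > 0$ this is integrable on $\R_+$, so $g \in L^1(\R;X)$. Moreover $g$ is globally Lipschitz on $\R$: its a.e.\ derivative $g'(s) = e^{-\omega s}\bigl(F'(s) - \omega F(s)\bigr)$ obeys $\|g'(s)\| \le L\, e^{-\omega s}(1 + \omega s)$, which is bounded on $\R_+$, and the zero extension is continuous at the origin precisely because $g(0) = F(0) = 0$. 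In particular $g$ is locally Hölder, hence locally Dini-continuous, with a modulus of continuity that is uniform on compact subsets of $\R$.

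Then I would apply the locally uniform Fourier inversion theorem of this paper to $g \in L^1(\R;X)$, using its uniform local modulus, to conclude that the symmetric partial Fourier integrals converge to $g(t)$ locally uniformly in $t$. Multiplying back by $e^{\omega t}$, which is bounded uniformly on each compact $t$-interval, preserves locally uniform convergence and returns $e^{\omega t} g(t) = F(t)$ for $t \ge 0$. Finally, the bound $\|F(s)\| \le Ls$ forces the exponential growth bound $\omega_0 \le 0$, so the hypothesis $\omega > 0$ is exactly the admissibility condition $\omega > \omega_0$.

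The main obstacle is not the algebraic reduction but the \emph{local uniformity} of the inversion: pointwise convergence of the Dirichlet integral for an $L^1$ function satisfying a Dini or Hölder condition is classical, but upgrading it to locally uniform convergence requires controlling the convolution with the Dirichlet kernel uniformly over compact $t$-sets, which is where the uniform modulus coming from the Lipschitz property of $g$ must be exploited and where the quantitative Fourier estimates of the paper do the real work. I would also note that, granting the (earlier) Proposition \ref{prop_h_loc}, the statement is an immediate special case: a Lipschitz $F$ with $F(0)=0$ lies in $\Hloc(\R_+;X)$ and has $\omega_0 \le 0$, so Proposition \ref{prop_h_loc} applies with any $\omega > 0$.
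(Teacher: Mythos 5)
Your argument is correct in substance, but note what it lines up with. The paper does not reprove Proposition~\ref{know_result} at all: it attributes the result to the literature and sketches the two proofs given there, one through the Riesz--Stieltjes representation (an abstract characterization of $\{F \in \Lip(\R_+;X): F(0)=0\}$), the other through the Ces\`aro limit of Proposition~\ref{cesaro_result} combined with the feebly oscillating property (\ref{feebly_osc}) and a real Tauberian theorem. Your route --- damp and extend by zero, identify the truncated contour integral with $e^{\omega t}$ times a symmetric partial Fourier integral of $g(s)=e^{-\omega s}F(s)\mathbf{1}_{\{s\ge 0\}}$, then invoke locally uniform Fourier inversion --- is instead exactly the strategy the paper itself develops to prove the more general Proposition~\ref{prop_h_loc}: the reduction step is the paper's lemma in Section~\ref{identify}, and the inversion step is Proposition~\ref{proposition_uniform_with_holder}. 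Your closing observation (Lipschitz with $F(0)=0$ is locally H\"older with an admissible growth bound below every $\omega>0$) is precisely how Proposition~\ref{know_result} follows as a special case of Proposition~\ref{prop_h_loc}. So, compared with the proofs the paper cites, your argument is more elementary and self-contained (no representation theory, no Tauberian theorem) and it generalizes beyond Lipschitz regularity; what the cited routes buy is machinery intrinsic to Laplace-transform theory, and, in the Tauberian route, a statement (Ces\`aro convergence) valid for merely continuous $F$.

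Two small repairs are needed. First, your verification that $g$ is Lipschitz relies on the a.e.\ derivative $F'(s)$; a Lipschitz function with values in a general Banach space $X$ need not be differentiable almost everywhere (this requires the Radon--Nikod\'ym property of $X$), so this step should be replaced by the direct estimate
\begin{align*}
\|g(s_1)-g(s_2)\| \le e^{-\omega s_1}\|F(s_1)-F(s_2)\| + \|F(s_2)\|\,\bigl|e^{-\omega s_1}-e^{-\omega s_2}\bigr|,
\end{align*}
which together with $\|F(s)\|\le Ls$ yields the Lipschitz bound without differentiating. Second, Proposition~\ref{proposition_uniform_with_holder} requires not only $g\in L^1(\R;X)$ and local H\"older continuity but also the majorant condition (\ref{assumption_integrability}); you never check this condition by name, but your bound $\|g(s)\|\le Ls\,e^{-\omega s}$ for $s\ge 0$, together with $g\equiv 0$ on $(-\infty,0)$, supplies it immediately, exactly as in the paper's proof of Proposition~\ref{prop_h_loc}. (A cosmetic point: under the paper's convention that a growth bound $\omega_0$ must itself be attained, a linearly growing $F$ does not have growth bound $0$; one should rather say that $F$ has growth bound $\omega'$ for every $\omega'>0$, which is all that is needed for any fixed $\omega>0$.) Neither point affects the soundness of the overall argument.
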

The proof of Proposition \ref{know_result} in \cite{arendt} relies on \cite[Theorem 2.3.4]{arendt}, which is obtained through the Riesz-Stieltjes Representation \cite[Theorem 2.1.1]{arendt}. This representation gives an abstract characterization of $\{F \in \Lip(X): F(0) = 0\}$. The authors give a second proof of Proposition \ref{know_result} in \cite[page 260]{arendt}, based on the existence of the Ces\`{a}ro limit, c.f. Proposition \ref{cesaro_result}, together with the property
\begin{align}
\label{feebly_osc}
\left|\left|t \mapsto \int\limits_{\omega - i R}^{\omega + i R} e^{\lambda t} \mathcal{L}(F(t))(\lambda) \,d\lambda- \int\limits_{\omega - i S}^{\omega + i S} e^{\lambda t} \mathcal{L}(F(t))(\lambda) \,d\lambda\right|\right|_{C([0,a],X)} \rightarrow 0  \textrm{ as } S,R\rightarrow \infty, \quad \frac{R}{S} \rightarrow 1
\end{align}
for all $a>0$, i.e. $F$ is \textit{feebly oscillating}. 
Justifying both properties is sufficient to conclude (\ref{formula_of_interest}) by the real Tauberian theorem \cite[Theorem 4.2.5]{arendt}. 
\begin{remark}
    An example which is covered by Proposition \ref{prop_h_loc} but not by Proposition \ref{know_result}, can be obtained by considering a Weierstrass function, for example $G(t) = \sum_{n = 1}^\infty \frac{\sin(n^2t)}{n^2}$, and setting $F(t) = e^{-t}G(t)$.
\end{remark}
Due to the observation, see Section \ref{identify}, that for exponentially bounded functions, Laplace inversion in the sense of (\ref{inverse_def}) can be traced back to  Fourier inversion, we study the Fourier transform first and provide sufficient conditions yielding locally uniform convergence of the Fourier inversion formula in one and higher dimensions. The gained knowledge on the Fourier inversion formula is then exploited to prove Proposition \ref{prop_h_loc}. 
We conclude this paper with an application of Proposition \ref{prop_h_loc} to $C_0$-semigroups on Favard spaces.

\paragraph*{Acknowledgments.}  The author is grateful to the anonymous referee for his very helpful suggestions, in particular, for the advice to study first the Fourier inversion formula and then to apply the knowledge to the Laplace transform.
This project is funded by the Deutsche Forschungsgemeinschaft (DFG, German Research Foundation) -- Project-ID 491897824.

\section{Uniformity in the Fourier inversion formula}
\subsection{Preliminaries}
We start with recalling classical results and formulae related to Fourier inversion for Banach space-valued functions. 
Fix $n\geq 1$ and let $F: \R^n \rightarrow X$ be $L^1$-integrable.
Define the Fourier transform
\begin{align*}
    \mathcal{F}(F)(k) := \hat{F}(s) := \int_{\R^n} e^{-ik\cdot s} F(s)\,ds,  \quad k \in \R^n
\end{align*}
and the inversion formula is given by
\begin{align*}
    \mathcal{F}^{-1}(\hat{F})(x) = \lim_{R \rightarrow \infty} S_R(F)(x), \quad x \in \R^n
\end{align*}
with 
\begin{align*}
    S_R(F)(x) = \frac{1}{(2\pi)^n}\int_{|k| \leq R} e^{ix \cdot k} \hat{F}(k) \,dk, \quad, \quad R\geq 1,\quad x \in \R^n.
\end{align*}
Analogously to the Laplace transform, we ask for regularity and localization assumptions such that
\begin{align}
\label{locally_uniform_intro_Fourier}
    S_R(F)(x) \rightarrow F(x) \textrm{ as } R\rightarrow \infty,  \text{ locally uniformly in } x \in \R^n.
\end{align}
If (\ref{locally_uniform_intro_Fourier}) holds, we shortly write: \textit{Fourier inversion holds locally uniformly}.
A central role in the study of Fourier inversion plays the spherical mean of $F$ at $x \in \R^n$ defined by the surface integral
\begin{align*}
    \bar{F}_x(r) = \frac{1}{\omega_{n-1}} \int_{\mathbb{S}^{n-1}} F(x+ r \omega) \,d\omega, \quad r\geq 0,
\end{align*}
where $\mathbb{S}^{n-1}$ is the unit sphere in $\R^n$ and $\omega_{n-1}$ is its surface measure. 
Following \cite{pinsky_book}, we rewrite
\begin{align}
\label{formula_invert}
\begin{split}
      S_R(F)(x) &= \int_{\R^n} F(x+z) D^R_n(z) \,dz \\
    &= \omega_{n-1} \int_0^\infty r^{n-1} D^R_n(r)  \bar{F}_x(r)\,dr
\end{split}
\end{align}
with the Dirichlet kernel $D^R_n(r)$ given by\footnote{Denote by $\widetilde{D}_R^n$ the Dirichlet kernel given in \cite{pinsky_book}. Then, the relation $D^R_n(z) = (2\pi)^{-n} \widetilde{D}_R^n((2\pi)^{-1} z)$ holds. This is due to slightly different choices of the Fourier transforms.}
\begin{align*}
    D^R_n(z) = \frac{1}{(2\pi)^n}\int_{|\xi|\leq R} e^{-iz\cdot\xi} \,d\xi, \quad R\geq 1,\quad z \in \R^n.
\end{align*}
Note that for a rotation matrix $D \in \R^{n\times n}$, it holds $(Dz) \cdot \xi = z \cdot (D^T\xi)$ and, since $D^T$ is also a rotation matrix, the substitution rule first gives $D^R_n(z) = D^R_n(|z|)$ and then the second identity in (\ref{formula_invert}) follows.

To that end, we record the following facts.
\begin{proposition}[\cite{pinsky_book}]
\label{prop_properties}
    \begin{enumerate}
    \item We have
    \begin{align*}
        D_1^R(r) = \frac{\sin(Rr)}{\pi r}, \quad D_2^R(r) = \frac{RJ_1(R r)}{2\pi r}, \quad RJ_1(R r) = -\frac{d}{dr} J_0(R r), 
    \end{align*}
   for $r>0$ and $R\geq 1$, where $J_0$ and $J_1$ denote Bessel functions of first kind.  In particular, $J_0$ and $J_1$ are smooth and bounded.
    \item For every $\delta>0$,
\begin{align*}
    \int_{-\delta}^\delta \frac{\sin(Rs)}{s} \,ds \rightarrow \pi, \quad  \int_{0}^\delta \frac{\sin(Rs)}{s} \,ds \rightarrow \frac{\pi}{2} \text{ as } R \rightarrow \infty.
\end{align*}
\item It holds
\begin{align*}
    J_0(0) = 1, \quad J_0(r) \rightarrow 0 \text{ as } r \rightarrow \infty.
\end{align*}
\item 
For $n\in \mathbb{N}_{\geq 3}$, we have the relation
\begin{align*}
     D_n^R(r) = -\frac{1}{r} \frac{d}{dr} D_{n-2}^R(r), \quad   r>0, \quad R\geq 1.
\end{align*} 
\item For each $n\in \mathbb{N}$ there exists some $C_n>0$ such that
\begin{align*}
    |D_n^R(r)| \leq  \frac{C_n R^n}{(1+Rr)^{\frac{n+1}{2}}}, \quad  r>0, \quad R\geq 1.
\end{align*}
\end{enumerate}
\end{proposition}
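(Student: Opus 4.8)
The plan is to derive everything from one structural fact --- a closed form for the radial Dirichlet kernel in terms of a single Bessel function --- and to read off (4) and (5) from standard Bessel identities, while handling (1)--(3) by direct computation. For $D_1^R$ I would evaluate $\frac{1}{2\pi}\int_{-R}^R e^{-ir\xi}\,d\xi$ explicitly. For $D_2^R$ I would pass to polar coordinates, reduce the angular integral by the representation $J_0(x)=\frac{1}{2\pi}\int_0^{2\pi}e^{-ix\cos\theta}\,d\theta$, and evaluate the radial integral through $\frac{d}{dx}(xJ_1(x))=xJ_0(x)$; then $RJ_1(Rr)=-\frac{d}{dr}J_0(Rr)$ is merely $J_0'=-J_1$. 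Item (2) is the Dirichlet integral: the substitution $u=Rs$ turns each integral into $\int_0^{R\delta}\frac{\sin u}{u}\,du\to\frac{\pi}{2}$, and evenness gives the two-sided version. For (3), $J_0(0)=1$ is read off from the integral representation, while $J_0(r)\to0$ follows from its large-argument asymptotics (equivalently, Riemann--Lebesgue applied to the representation).

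The unifying step is to note that $D_n^R(z)$ depends only on $r=|z|$, so evaluating the defining integral over $\{|\xi|\le R\}$ in polar coordinates turns the angular part into a Bessel factor and the radial part into a Hankel integral, which evaluates via $\int_0^a t^{\nu+1}J_\nu(t)\,dt=a^{\nu+1}J_{\nu+1}(a)$ to
\begin{align*}
    D_n^R(r) = (2\pi)^{-n/2}\,\frac{R^{n/2}}{r^{n/2}}\,J_{n/2}(Rr), \quad r>0,\ R\geq 1;
\end{align*}
the instances $J_{1/2}(x)=\sqrt{2/(\pi x)}\,\sin x$ and $J_1$ recover the two formulae in (1). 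Granting this, (4) is the dimension-walk identity for radial Fourier transforms and follows from the Bessel recursion $\frac{d}{dx}(x^{-\nu}J_\nu(x))=-x^{-\nu}J_{\nu+1}(x)$ with $\nu=(n-2)/2$: differentiating $D_{n-2}^R$ in $r$ and dividing by $-r$ raises the order to $n/2$ and reproduces the claimed $R$- and $r$-powers of $D_n^R$, the numerical constant being a matter of tracking the chosen normalisation.

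The real work is the decay estimate (5), and with the closed form in hand it reduces to the single uniform Bessel bound $|J_\nu(x)|\le C_\nu\min\{x^\nu,x^{-1/2}\}$, which splices the origin behaviour $J_\nu(x)\sim(x/2)^\nu/\Gamma(\nu+1)$ to the large-argument asymptotic $J_\nu(x)=\sqrt{2/(\pi x)}\,\cos(x-\tfrac{\nu\pi}{2}-\tfrac{\pi}{4})+O(x^{-3/2})$. Splitting at $Rr=1$ and inserting this into the closed form gives $|D_n^R(r)|\lesssim R^n$ for $Rr\le1$ and $|D_n^R(r)|\lesssim R^{(n-1)/2}r^{-(n+1)/2}$ for $Rr\ge1$; comparing each with $R^n(1+Rr)^{-(n+1)/2}$ on its regime yields the claim after fixing $C_n$. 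I expect (5) to be the genuine obstacle: the large-argument decay $J_\nu(x)=O(x^{-1/2})$ is the one nontrivial analytic input, and anyone wishing to avoid the closed form and argue inductively through (4) instead would find the bound on $D_{n-2}^R$ insufficient by itself, having to propagate a matching estimate on its radial derivative at every step. Since all of this is classical, the most economical choice in the paper is simply to invoke \cite{pinsky_book}.
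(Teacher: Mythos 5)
The paper does not actually prove this proposition --- it records these facts with a citation to \cite{pinsky_book} --- so there is no internal proof to compare against; judged on its own, your outline follows the standard route (essentially that of the cited source): a closed Bessel form for the radial kernel, then classical Bessel identities and asymptotics. Items (i), (ii), (iii) and (v) are correct as you sketch them. In particular, the closed form $D_n^R(r)=(2\pi)^{-n/2}(R/r)^{n/2}J_{n/2}(Rr)$ is right for the paper's normalization $D_n^R(z)=(2\pi)^{-n}\int_{|\xi|\le R}e^{-iz\cdot\xi}\,d\xi$ (it reproduces both formulae in (i)), and splitting at $Rr=1$ with $|J_\nu(x)|\le C_\nu\min\{x^\nu,x^{-1/2}\}$ yields (v) exactly as you describe.

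The gap is precisely the step you defer in (iv) as ``a matter of tracking the chosen normalisation'': the tracking does not close. From your closed form and $\frac{d}{dx}\bigl(x^{-\nu}J_\nu(x)\bigr)=-x^{-\nu}J_{\nu+1}(x)$ with $\nu=\frac{n-2}{2}$, one gets
\begin{align*}
-\frac{1}{r}\frac{d}{dr}D_{n-2}^R(r)
=(2\pi)^{-\frac{n-2}{2}}R^{\frac{n}{2}}r^{-\frac{n}{2}}J_{\frac{n}{2}}(Rr)
=2\pi\,D_n^R(r),
\end{align*}
so the identity consistent with the paper's definition is $D_n^R(r)=-\frac{1}{2\pi r}\frac{d}{dr}D_{n-2}^R(r)$, not the printed one. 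This is not an artifact of the closed form: directly from the definition, $D_1^R(r)=\frac{\sin(Rr)}{\pi r}$ and $D_3^R(r)=\frac{\sin(Rr)-Rr\cos(Rr)}{2\pi^2r^3}$, while $-\frac{1}{r}\frac{d}{dr}D_1^R(r)=\frac{\sin(Rr)-Rr\cos(Rr)}{\pi r^3}=2\pi D_3^R(r)$. In other words, items (i) and (iv) cannot both hold under the paper's normalization; (iv) as printed is a normalization slip (harmless for the sequel, since the constants $C_{j,k}(n)$ in Lemma \ref{formula_lemma} are left abstract and $C_{0,k}(n)$ is pinned down a posteriori in Remark \ref{remark_constant} by a Gaussian test). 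Your write-up must therefore either prove the corrected identity and flag the factor $\frac{1}{2\pi}$, or switch to a $(2\pi)^{-n/2}$-normalized kernel, for which the clean recursion does hold; as it stands, the constant cannot be made to ``work out'' to the stated (iv). Your closing remark on (v) is apt: the $O(x^{-1/2})$ decay of $J_\nu$ is the one genuine analytic input, and an induction through (iv) alone would not recover the bound without also propagating estimates on derivatives.
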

We distinguish between odd and even dimensions and study the cases $n = 1$ and $n= 2$ independently.
The following formulae are a consequence of an iterative application of Proposition \ref{prop_properties} (iv).
\begin{lemma}
\label{formula_lemma}
Let $(\eta_R)_{R\geq 1}$ be a sequence of smooth cut-off functions with $0\leq \eta_R\leq 1$ being one in the interval $(0,R)$ and zero in $(R+1,\infty)$.  We obtain the following identities.
    \begin{enumerate}
        \item Let $n = 2k+1$ with $k\in \mathbb{N}$. Let $B: \R^n \rightarrow X$ be $L^1$-integrable and $k$-times continuously differentiable. There exist universal constants $C_{0,k}(n),...,C_{k,k}(n) \in \R$ such that
        \begin{align*}
            \int_0^\infty D_n^R(r) B(r) r^{n-1} \,dr &=\sum_{j = 0}^k C_{j,k}(n)  \int_0^\infty D_1^R(r) r^{j} \left(\frac{d}{dr}\right)^j (B(r)\eta_R(r)) \,dr \\
            &\quad + \int_0^\infty D_n^R(r) B(r) (1-\eta_R(r)) r^{n-1} \,dr.
        \end{align*}
        \item Let $n = 2k$ with $k\in \mathbb{N}$. Let $B: \R^n \rightarrow X$ be integrable and ($k-1$)-times continuously differentiable.  There exist universal constants $C_{0,k-1}(n),...,C_{k-1,k-1}(n) \in \R$ such that
        \begin{align*}
            \int_0^\infty D_n^R(r) B(r) r^{n-1} \,dr &=  \sum_{j = 0}^{k-1} C_{j,k-1}(n) \int_0^\infty D_2^R(r)r^{j+1} \left(\frac{d}{dr}\right)^j (B(r)\eta_R(r)) \,dr\\
            &\quad + \int_0^\infty D_n^R(r) B(r) (1-\eta_R(r)) r^{n-1} \,dr.
        \end{align*}
    \end{enumerate}
\end{lemma}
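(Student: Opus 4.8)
The plan is to reduce both identities to a single integration-by-parts scheme built on the recurrence $D_n^R(r) = -\frac1r\frac{d}{dr}D_{n-2}^R(r)$ from Proposition \ref{prop_properties}(iv). First I would split the integral using $1 = \eta_R + (1-\eta_R)$, keeping $\int_0^\infty D_n^R(r) B(r)(1-\eta_R(r)) r^{n-1}\,dr$ untouched as the stated remainder, and working only with $g := B\eta_R$, which is compactly supported in $[0,R+1]$ and inherits the differentiability of $B$. Writing $g$ in place of $B\eta_R$, the remaining task is to show that $\int_0^\infty D_n^R(r) g(r) r^{n-1}\,dr$ equals the claimed sum.

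For the iteration I would introduce the formal adjoint of $L := -\frac1r\frac{d}{dr}$ with respect to Lebesgue measure on $(0,\infty)$, namely $L^\ast v = \frac{d}{dr}(v/r)$. Substituting $D_m^R = L D_{m-2}^R$ and integrating by parts once yields $\int_0^\infty D_m^R \Phi\,dr = \int_0^\infty D_{m-2}^R (L^\ast\Phi)\,dr$, provided the boundary contribution $[\,D_{m-2}^R\,\Phi/r\,]_0^\infty$ vanishes. Iterating this $k$ times in the odd case ($n=2k+1$, landing on $D_1^R$) and $k-1$ times in the even case ($n=2k$, landing on $D_2^R$) reduces the integral to $\int_0^\infty D_1^R(r)\,(L^\ast)^k(r^{n-1}g)(r)\,dr$, respectively $\int_0^\infty D_2^R(r)\,(L^\ast)^{k-1}(r^{n-1}g)(r)\,dr$. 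The number of differentiations of $g$ produced is exactly $k$, respectively $k-1$, which is precisely what the stated regularity hypotheses on $B$ supply.

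It then remains to evaluate the repeated application of $L^\ast$. From $L^\ast(r^a h) = (a-1)r^{a-2}h + r^{a-1}h'$ one sees that each application either lowers the power of $r$ by two and keeps the function, or lowers it by one and differentiates the function. Starting from $r^{n-1}g$ and counting, the contribution in which $g$ has been differentiated $j$ times carries the power $r^{j}$ in the odd case (since $2k-2(k-j)-j=j$) and $r^{j+1}$ in the even case (since $2k-1-2(k-1-j)-j=j+1$); collecting all such combinatorial paths produces universal constants $C_{j,k}(n)$, respectively $C_{j,k-1}(n)$, depending only on $n$ and $j$, which is exactly the asserted form.

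The main obstacle I anticipate is the careful bookkeeping ensuring that all boundary terms at $r=0$ vanish along the way. Since the kernels $D_m^R$ are smooth and bounded near $0$ by Proposition \ref{prop_properties}(i),(v) and $g$ has compact support, the boundary term at $\infty$ is trivially zero, and the one at $0$ is governed by the minimal power of $r$ occurring in the intermediate integrand. That minimal power equals $n-1-2s$ after $s$ reduction steps, so after dividing by the extra factor $1/r$ it remains at least $1$ throughout all required steps in both parity cases; this is what legitimizes each integration by parts and, simultaneously, what forces the weights $r^{j}$ and $r^{j+1}$ in the final formulae.
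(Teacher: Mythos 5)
Your proposal is correct, and its engine is the same as the paper's: split off $\int_0^\infty D_n^R(r)B(r)(1-\eta_R(r))r^{n-1}\,dr$ via $1=\eta_R+(1-\eta_R)$, invoke the recurrence $D_n^R(r)=-\frac1r\frac{d}{dr}D_{n-2}^R(r)$ from Proposition \ref{prop_properties} (iv), and integrate by parts repeatedly on $(0,\infty)$ applied to the compactly supported function $B\eta_R$. The difference is organizational rather than conceptual: the paper proves (ii) by induction on $k$, applying the induction hypothesis to $A$ and to $r\frac{d}{dr}A$ and recombining the output through the Leibniz-type identity (\ref{leibnitz_type}), whereas you unroll that induction into a direct iteration of the formal adjoint $L^\ast\Phi=\frac{d}{dr}(\Phi/r)$ and track the exponents combinatorially ($j$ differentiations among $s$ reduction steps produce $r^{n-1-2s+j}\,g^{(j)}$), which delivers the weights $r^j$ (odd case, $s=k$) and $r^{j+1}$ (even case, $s=k-1$) in one stroke. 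Your route has two genuine advantages: it treats both parities uniformly, so it also proves part (i), which the paper simply cites from Pinsky; and it makes explicit why the boundary terms at $r=0$ vanish. This last point deserves emphasis, since $g=B\eta_R$ does \emph{not} vanish near $r=0$ (as $\eta_R\equiv 1$ there), so the vanishing genuinely hinges on the minimal surviving power $n-2-2s\geq 1$ together with the boundedness of the kernels $D_m^R$ near the origin — exactly the bookkeeping you carry out, and a point the paper leaves implicit in the phrase ``compactly supported''.
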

\begin{proof}
    For (i), we refer to \cite[pages 141-142]{{pinsky_book}}.
    We prove (ii) for the sake of convenience as we did not find a proof for this formula. For this purpose, we show for every $k \in \mathbb{N}$ with $n = 2k$, that it holds
    \begin{align}
    \label{formula_by_induction}
        \int_0^\infty D_n^R(r) A(r) r^{n-1} \,dr &=\sum_{j = 0}^{k-1} C_{j,k-1}(n) \int_0^\infty D_2^R(r)r^{j+1} \left(\frac{d}{dr}\right)^j (A(r)) \,dr
    \end{align}
    for all compactly supported and ($k-1$)-times differentiable $A: (0,\infty) \rightarrow X$.
    For $k = 1$, this is precisely formula (\ref{formula_invert}) with $n = 2$. We do an induction argument and therefore suppose that the assertion (\ref{formula_by_induction}) is known for $k-1$ with $k\geq 2$. Let $A: (0,\infty) \rightarrow X$ be any compactly supported and ($k-1$)-times differentiable function. There exist universal constants $C_j^1 \in \R$ and $C_j^2 \in \R$ for $j = 0,...,k-2$ such that 
    \begin{align}
    \label{leibnitz_type}
        \left(\frac{d}{dr}\right)^j \left(r \frac{d}{dr}A(r)\right) = C_j^1 r \left(\frac{d}{dr}\right)^{j+1} (A(r)) + C_j^2\left(\frac{d}{dr}\right)^{j} (A(r)).
    \end{align}
    Integrating by parts once and using Proposition \ref{prop_properties}, we find
    \begin{align*}
        \int_0^\infty D_n^R(r) A(r) r^{n-1} \,dr  &= \int_0^\infty \left(-\frac{1}{r} \frac{d}{dr}\right)^{k-1}\left(D_2^R(r)\right) A(r)r^{n-1} \,dr \\
        &= \int_0^\infty \underbrace{\left(-\frac{1}{r} \frac{d}{dr}\right)^{k-2} \left(D_2^R(r) \right)}_{ = D_{n-2}^R(r)} \left(r \frac{d}{dr}\left(A(r)\right)\ + (n-2)A(r) \right) r^{n-3} \,dr.
    \end{align*}
    Applying the induction hypothesis (\ref{formula_by_induction}) to $A(r)$ and $r\frac{d}{dr}(A(r))$ and using (\ref{leibnitz_type}), it holds
    \begin{align*}
         \int_0^\infty D_n^R(r) A(r) r^{n-1} \,dr &= (n-2)\sum_{j = 0}^{k-2}C_{j,k-2}(n-2) \int_0^\infty D_2^R(r)r^{j+1} \left(\frac{d}{dr}\right)^j (A(r))\,dr \\
         &\quad + \sum_{j = 0}^{k-2}C_j^1 C_{j,k-2}(n-2) \int_0^\infty D_2^R(r)r^{j+2} \left(\frac{d}{dr}\right)^{j+1} (A(r)) \,dr \\
         &\quad +\sum_{j = 0}^{k-2}C_j^2 C_{j,k-2}(n-2)  \int_0^\infty D_2^R(r)r^{j+1} \left(\frac{d}{dr}\right)^{j} (A(r)) \,dr.
    \end{align*}
   Since $A$ was chosen arbitrarily,  this gives the assertion (\ref{formula_by_induction}) for $n = 2k$  with determinable constants $C_{0,k-1}(n),..., C_{k-1,k-1}(n) \in \R$. Splitting
    \begin{align*}
        \int_0^\infty D_n^R(r) B(r) r^{n-1} \,dr &=  \int_0^\infty D_n^R(r) (B(r) \eta_R(r)) r^{n-1} \,dr\\
            &\quad + \int_0^\infty D_n^R(r) B(r) (1-\eta_R(r)) r^{n-1} \,dr, 
    \end{align*}
    the claimed formula (ii) follows by choosing $A(r) = B(r) \eta_R(r)$ in (\ref{formula_by_induction}).
\end{proof}
Finally, we isolate the part of the inversion formula which tends to zero uniformly for every $x \in \R^n$.
\begin{lemma}
\label{lemma_universel_zero}
 For every $n\in \mathbb{N}$, there exists some constant $C_n>0$ such that
        \begin{align*}
            \sup_{x \in \R^n}\left|\left|\int_0^\infty D_n^R(r) \bar{F}_x(r) (1-\eta_R(r)) r^{n-1} \,dr \right|\right| \leq \frac{C_n}{R} ||F||_{L^1}
        \end{align*}
for every $R\geq 1$ and $F \in L^1(\R;X)$.    
\end{lemma}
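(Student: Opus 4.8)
The plan is to estimate the integral directly, moving the norm inside, exploiting that the factor $1-\eta_R$ confines the integration to the region $r\geq R$, and there combining the pointwise decay of the Dirichlet kernel from Proposition~\ref{prop_properties}(v) with an $L^1$-bound for the weighted spherical means. First I would use $0\leq 1-\eta_R\leq 1$ together with the fact that $\eta_R\equiv 1$ on $(0,R)$, so that $1-\eta_R$ is supported in $[R,\infty)$, to write
\begin{align*}
\left|\left|\int_0^\infty D_n^R(r)\bar{F}_x(r)(1-\eta_R(r))r^{n-1}\,dr\right|\right|\leq \int_R^\infty |D_n^R(r)|\,||\bar{F}_x(r)||\,r^{n-1}\,dr.
\end{align*}

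The crux is then a pointwise estimate on $|D_n^R(r)|\,r^{n-1}$ valid for $r\geq R\geq 1$. Using Proposition~\ref{prop_properties}(v) and $1+Rr\geq Rr$ gives $|D_n^R(r)|\leq C_n R^{(n-1)/2}r^{-(n+1)/2}$, hence $|D_n^R(r)|\,r^{n-1}\leq C_n R^{(n-1)/2}r^{(n-3)/2}$. Writing $r^{(n-3)/2}=r^{n-1}r^{-(n+1)/2}$ and bounding $r^{-(n+1)/2}\leq R^{-(n+1)/2}$ on $r\geq R$, the powers of $R$ collapse to exactly $|D_n^R(r)|\,r^{n-1}\leq \frac{C_n}{R}\,r^{n-1}$ for $r\geq R$.

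Finally, I would insert this bound and reduce the weighted spherical mean to the $L^1$-norm of $F$. By the definition of $\bar{F}_x$, polar coordinates, and translation invariance of Lebesgue measure one has $\int_0^\infty ||\bar{F}_x(r)||\,r^{n-1}\,dr\leq \frac{1}{\omega_{n-1}}\int_{\R^n}||F(x+z)||\,dz=\frac{1}{\omega_{n-1}}||F||_{L^1}$, uniformly in $x$. Combining,
\begin{align*}
\int_R^\infty |D_n^R(r)|\,||\bar{F}_x(r)||\,r^{n-1}\,dr\leq \frac{C_n}{R}\int_0^\infty ||\bar{F}_x(r)||\,r^{n-1}\,dr\leq \frac{C_n}{R\,\omega_{n-1}}\,||F||_{L^1},
\end{align*}
and taking the supremum over $x\in\R^n$ yields the claim after relabeling the constant. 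I expect the only real (and still mild) difficulty to be the exponent bookkeeping in the middle step: the gain $1/R$ appears precisely because the cutoff sits at the scale $r\approx R$ that is matched to the self-similar scaling $R^n/(1+Rr)^{(n+1)/2}$ of $D_n^R$; a cutoff at a fixed location would not produce any decay in $R$. The remaining manipulations are routine.
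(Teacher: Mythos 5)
Your proof is correct and takes essentially the same route as the paper: both arguments use the support of $1-\eta_R$ to restrict the integral to $r\geq R$, apply Proposition~\ref{prop_properties}~(v) to deduce $|D_n^R(r)|\leq C_n/R$ on that region, and then bound $\int_0^\infty ||\bar F_x(r)||\,r^{n-1}\,dr$ by a multiple of $||F||_{L^1}$ via polar coordinates, uniformly in $x$. The only difference is cosmetic: you spell out the exponent bookkeeping $R^{(n-1)/2}r^{-(n+1)/2}\leq R^{-1}$ for $r\geq R$, which the paper leaves implicit.
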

\begin{proof}
     We invoke Proposition \ref{prop_properties} (v) to infer
    \begin{align*}
        \left|\left|\int_0^\infty D_n^R(r) \bar{F}_x(r) (1-\eta_R(r)) r^{n-1} \,dr\right|\right| \leq  \left|\left|\int_R^\infty D_n^R(r) \bar{F}_x(r) r^{n-1} \,dr\right|\right| \leq \frac{C_n}{R} \int_R^\infty r^{n-1} |\bar{F}_x(r)| \,dr \leq \frac{C_n}{R} ||F||_{L^1},
    \end{align*}
     where we use that $\int_0^\infty r^{n-1}  \int_{\mathbb{S}^{n-1}} |F(x+r\omega)|\,d\omega\,dr = ||F||_{L^1}$ for every $x \in \R^n$.
    We also refer to \cite[page 141]{pinsky_book}.
\end{proof}
We note that in one spatial dimension, i.e. for $n = 1$, it holds \begin{align}
\label{formula_one_dim}
    \bar{F}_x(r) = \frac{1}{2}(F(x+r)+F(x-r)), \quad \omega_0 = 2.
\end{align}
Invoking \cite[Theorem 2.3.4]{pinsky_book} and a vector-valued version of the Riemann-Lebesgue lemma, see e.g. \cite{AnaBanach}, one finds that the additional condition  $F \in \Dini(\R;X)$ implies pointwise Fourier inversion. Pinsky also mentions local H\"older-continuity as a sufficient condition for pointwise Fourier inversion in \cite[Corollary 2.3.5]{pinsky_book}. In the upcoming section, we lift this pointwise convergence to locally uniform convergence.

\subsection{Compactness criteria and result on \texorpdfstring{$\R$}{Lg}}
In this section, we consider $n = 1$.
The following compactness criteria are sketched in \cite[Corollary 2]{pinsky_paper}. For the sake of self-containment, we provide their proofs here.
\begin{lemma}[A uniform version of the Riemann-Lebesgue lemma]
\label{uniform_riemann}
   Let $N$ be any set. Let $F: N\times \R \rightarrow X$. If
    \begin{align*}
        \{s \mapsto F(t,s): t\in N\}
    \end{align*}
    is a compact subset of $L^1(\R;X)$, then
    \begin{align*}
        \int_\R \sin(Rs)F(t,s) \,ds \rightarrow 0 \text{ as } R \rightarrow \infty \text{ uniformly in } t \in N.
    \end{align*}
\end{lemma}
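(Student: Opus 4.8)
The plan is to reduce the uniform statement to the pointwise vector-valued Riemann--Lebesgue lemma by exploiting the total boundedness of the given compact family. I would write $\Phi_R(g) := \int_\R \sin(Rs)\, g(s)\,ds \in X$ for $g \in L^1(\R;X)$ and $R\geq 1$. The first and essentially only structural observation is that the maps $\Phi_R$ are \emph{equi-Lipschitz}: since $|\sin(Rs)| \leq 1$, we have
\begin{align*}
\left\|\Phi_R(g) - \Phi_R(h)\right\| = \left\| \int_\R \sin(Rs)\bigl(g(s)-h(s)\bigr)\,ds \right\| \leq \|g-h\|_{L^1}
\end{align*}
for all $g,h \in L^1(\R;X)$ and all $R\geq 1$, with a Lipschitz constant that is independent of $R$.

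Next I would fix $\varepsilon>0$ and use compactness. Since $K := \{s \mapsto F(t,s) : t\in N\}$ is a compact subset of the complete metric space $L^1(\R;X)$, it is totally bounded, so there exist finitely many functions $g_1,\dots,g_m \in L^1(\R;X)$ with $K \subseteq \bigcup_{i=1}^m B_{L^1}(g_i,\varepsilon/2)$. Applying the pointwise Riemann--Lebesgue lemma in its $X$-valued form (see \cite{AnaBanach}) to each of the \emph{finitely many} functions $g_1,\dots,g_m$ yields a single threshold $R_0\geq 1$ such that $\|\Phi_R(g_i)\| < \varepsilon/2$ for all $i\in\{1,\dots,m\}$ and all $R\geq R_0$.

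Finally I would combine both ingredients. Given any $t\in N$, choose an index $i$ with $\|F(t,\cdot)-g_i\|_{L^1} < \varepsilon/2$. Then for every $R\geq R_0$ the equi-Lipschitz bound and the triangle inequality give
\begin{align*}
\left\|\Phi_R\bigl(F(t,\cdot)\bigr)\right\| \leq \left\|\Phi_R\bigl(F(t,\cdot)\bigr) - \Phi_R(g_i)\right\| + \left\|\Phi_R(g_i)\right\| \leq \|F(t,\cdot)-g_i\|_{L^1} + \frac{\varepsilon}{2} < \varepsilon.
\end{align*}
Since $R_0$ was chosen independently of $t$, this bound is uniform in $t\in N$, which is exactly the asserted uniform convergence.

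I expect no serious obstacle here: the argument is a standard total-boundedness-plus-equicontinuity reduction. The two points that genuinely matter are that the finite cover is what converts finitely many \emph{pointwise} limits into a \emph{single} uniform threshold $R_0$, and that the Lipschitz constant of $\Phi_R$ is truly independent of $R$, so that the approximation error introduced by replacing $F(t,\cdot)$ by the nearest center $g_i$ does not deteriorate as $R\to\infty$. Everything else rests on the availability of the vector-valued pointwise Riemann--Lebesgue lemma, which the preliminaries already supply.
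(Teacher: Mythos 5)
Your proof is correct, but it takes a genuinely different route from the paper's. You exploit compactness through total boundedness: a finite $\varepsilon/2$-net, the pointwise vector-valued Riemann--Lebesgue lemma applied to the finitely many centers (giving one threshold $R_0$), and the equi-Lipschitz bound $\|\Phi_R(g)-\Phi_R(h)\|\leq\|g-h\|_{L^1}$, uniform in $R$, to transfer smallness to the whole family. The paper instead reproves the Riemann--Lebesgue mechanism uniformly from scratch: it writes $\sin(Rs)$ via $e^{\pm iRs}$, applies the half-period translation $s\mapsto s+\pi/R$ to obtain
\begin{align*}
\sup_{t\in N}\left\|\int_\R F(t,s)e^{iRs}\,ds\right\| \leq \frac{1}{2}\sup_{t\in N}\int_\R \left\|F(t,s)-F\left(t,s+\frac{\pi}{R}\right)\right\|\,ds,
\end{align*}
and then invokes a Banach space-valued version of the Kolmogorov compactness theorem, which says exactly that translations are $L^1$-equicontinuous uniformly over a compact subset of $L^1(\R;X)$. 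What your argument buys: on the compactness side it needs only the trivial fact that compact metric sets are totally bounded (no vector-valued Kolmogorov theorem has to be established by mimicking a scalar proof), and it isolates a general principle, namely that equi-Lipschitz linear functionals converging pointwise to zero converge uniformly on compact sets; the price is that the pointwise vector-valued Riemann--Lebesgue lemma is consumed as a black box (legitimately so, since the paper itself cites it in the preliminaries). What the paper's argument buys: it is self-contained apart from Kolmogorov, effectively reproving the pointwise lemma along the way, and it is quantitative, bounding the decay by the translation modulus of the compact family at step $\pi/R$, whereas your net argument gives no rate.
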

\begin{proof}
    We rewrite 
\begin{align}
\label{decomp_sin}
    \int_\R F(t,s) \sin( Rs) \,ds  = \frac{1}{2i} \left( \int_\R F(t,s) e^{isR} \,ds -  \int_\R F(t,s) e^{-isR} \,ds \right).
\end{align}
Applying the substitution $s \mapsto s+ \frac{\pi}{R}$, we arrive at
\begin{align*}
    \int_\R F(t,s) e^{isR} \,ds = \int_\R F\left(t,s+ \frac{\pi}{R}\right) e^{isR} e^{i \pi} \,ds =  -\int_\R F\left(t,s+ \frac{\pi}{R}\right) e^{isR} \,ds.
\end{align*}
This shows
\begin{align*}
     \sup_{t \in N}\left|\left|\int_\R F(t,s) e^{isR} \,ds\right|\right| \leq \frac{1}{2} \sup_{t \in N} \int_\R \left|\left|F(t,s)-F\left(t,s+ \frac{\pi}{R}\right)\right|\right| \,ds.
\end{align*}
Similarly, one estimates the second integral in  (\ref{decomp_sin}) after taking its complex conjugate. By a Banach space-value version of the Kolmogorov compactness theorem (mimic the proof of \cite[2.32 Theorem]{adams}), we arrive at \begin{align}
\label{main_riemann_equation}
    \sup_{t \in N} \int_\R \left|\left|F(t,s)-F\left(t,s+ \frac{\pi}{R}\right)\right|\right| \,ds \rightarrow 0 \text{ as } R \rightarrow \infty,
\end{align}
which finishes the proof.
\end{proof}
\begin{lemma}
\label{compactness_criterion}
Let $F: \R \rightarrow X$ be $L^1$-integrable and $a\leq b$. Fix $\delta>0$. Suppose that\footnote{For $I \subset \R$, we set $\mathbf{1}_{I}(s) = 1$ for $s \in I$ and $\mathbf{1}_{I}(s) = 0$ for $s \notin I$.}
\begin{align*}
    \left\{s \mapsto \frac{F(t+s) - \mathbf{1}_{(-\delta,\delta)}(s) F(t)}{s}: t\in [a,b]\right\} 
\end{align*}
is a compact subset of $L^1(\R)$. Then, 
\begin{align*}
    F(t) = \lim_{R \rightarrow \infty} \frac{1}{2\pi}\int^{R}_{-R} e^{i t k} \hat{F}(k) \,dk \text{ uniformly in } t \in [a,b].
\end{align*}
\end{lemma}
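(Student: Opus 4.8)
The plan is to reduce everything to the one-dimensional inversion identity (\ref{formula_invert}) together with the explicit Dirichlet kernel $D_1^R(z) = \sin(Rz)/(\pi z)$ from Proposition \ref{prop_properties}(i), and then to feed the relevant family into the uniform Riemann--Lebesgue lemma (Lemma \ref{uniform_riemann}). Concretely, I would start from
\begin{align*}
S_R(F)(t) = \frac{1}{2\pi}\int_{-R}^R e^{itk}\hat{F}(k)\,dk = \int_\R F(t+z)\,\frac{\sin(Rz)}{\pi z}\,dz
\end{align*}
and insert the splitting $F(t+z) = \big(F(t+z) - \mathbf{1}_{(-\delta,\delta)}(z)F(t)\big) + \mathbf{1}_{(-\delta,\delta)}(z)F(t)$. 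This produces exactly the family $g_t(z) := \big(F(t+z) - \mathbf{1}_{(-\delta,\delta)}(z)F(t)\big)/z$ appearing in the hypothesis, so that
\begin{align*}
S_R(F)(t) - F(t) = \frac{1}{\pi}\int_\R g_t(z)\sin(Rz)\,dz + F(t)\Big(\frac{1}{\pi}\int_{-\delta}^\delta \frac{\sin(Rz)}{z}\,dz - 1\Big).
\end{align*}

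For the first term I would invoke Lemma \ref{uniform_riemann} with $N = [a,b]$: since $\{g_t : t\in[a,b]\}$ is by assumption compact in $L^1(\R;X)$, the integral $\int_\R g_t(z)\sin(Rz)\,dz$ tends to $0$ as $R \to \infty$ uniformly in $t \in [a,b]$. For the second term, the scalar factor $\frac{1}{\pi}\int_{-\delta}^\delta \frac{\sin(Rz)}{z}\,dz - 1$ tends to $0$ by Proposition \ref{prop_properties}(ii), and crucially does so independently of $t$.

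The one genuine obstacle is therefore to control the second term uniformly, i.e.\ to show $\sup_{t\in[a,b]}\|F(t)\| < \infty$; this is not assumed directly and must be extracted from the hypotheses. I would obtain it from the fact that a compact subset of $L^1$ is bounded, so that $M := \sup_{t\in[a,b]}\|g_t\|_{L^1} < \infty$. For $z \in (\delta/2,\delta)$ one has $g_t(z) = (F(t+z)-F(t))/z$, hence $F(t) = F(t+z) - z\,g_t(z)$; integrating this identity over $z\in(\delta/2,\delta)$ and estimating $\|\int_{\delta/2}^\delta F(t+z)\,dz\| \le \|F\|_{L^1}$ and $\|\int_{\delta/2}^\delta z\,g_t(z)\,dz\| \le \delta\|g_t\|_{L^1} \le \delta M$ yields the bound $\|F(t)\| \le \tfrac{2}{\delta}\|F\|_{L^1} + 2M$, valid for every $t\in[a,b]$. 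Combining this uniform bound on $\|F\|$ with the $t$-independent scalar factor makes the second term vanish uniformly, and together with the first term this gives $S_R(F)(t)\to F(t)$ uniformly on $[a,b]$.
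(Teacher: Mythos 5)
Your proof is correct and follows essentially the same route as the paper: the same splitting of $F(t+z)$ into $\bigl(F(t+z)-\mathbf{1}_{(-\delta,\delta)}(z)F(t)\bigr)+\mathbf{1}_{(-\delta,\delta)}(z)F(t)$ inside the identity $S_R(F)(t)=\int_\R F(t+z)\frac{\sin(Rz)}{\pi z}\,dz$, with Lemma \ref{uniform_riemann} applied to the compact family and Proposition \ref{prop_properties} (ii) handling the Dirichlet integral. You are in fact more careful than the paper on one point: uniform convergence of the second term needs $\sup_{t\in[a,b]}\|F(t)\|<\infty$, which the paper's proof leaves implicit, and your averaging argument extracting this bound from the $L^1$-boundedness of the compact family $\{g_t\}$ closes that small gap cleanly.
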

\begin{proof}
Let $t\in [a,b]$.    Using (\ref{formula_invert}), (\ref{formula_one_dim}) and Proposition \ref{prop_properties} (i), we find
\begin{align*}
    S_R(F)(t) = \frac{1}{\pi}\int_0^\infty \frac{\sin(Rs)}{s}(F(t+s) +F(t-s)) \,ds = \frac{1}{\pi}\int_{-\infty}^\infty \frac{\sin(Rs)}{s}F(t+s) \,ds.
\end{align*}
Invoking Proposition \ref{prop_properties} (ii), we arrive at
    \begin{align*}
        \lim_{R \rightarrow \infty} \frac{1}{2\pi}\int^{R}_{-R} e^{is t} \hat{F}(s) \,ds  - F(t) =  &\lim_{R \rightarrow \infty}\frac{1}{\pi} \int_{-\infty}^\infty \frac{\sin(Rs)}{s} (F(t+s) - \mathbf{1}_{(-\delta,\delta)}(s) F(t))\,ds \\ 
    \end{align*}
    and the result follows from Lemma \ref{uniform_riemann}.
\end{proof}
We are now in the position to consider the special case of locally H\"older-continuous functions. Additionally, a mild localization condition is required which serves to apply dominated convergence. In the subsequent remark, we discuss further conditions and examples.
\begin{proposition}
\label{proposition_uniform_with_holder}
Let $F \in \Hloc(\R;X)$ be $L^1$-integrable. Furthermore, suppose that for every $a \leq b$ there exist  $\delta>0$ and  $g \in L^1(\R)$ such that
\begin{align}
\label{assumption_integrability}
   |s|^{-1}\sup_{t \in [a,b]} ||F(t+s)|| \leq g(s)
\end{align}
for all $|s|\geq \delta$.
Then:
\begin{align*}
    F(t) = \lim_{R \rightarrow \infty} \frac{1}{2\pi} \int^{R}_{-R} e^{is t} \hat{F}(s) \,ds  \text{ locally uniformly in } t \in \R.
\end{align*}  
\end{proposition}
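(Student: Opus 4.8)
The plan is to reduce everything to the compactness criterion of Lemma \ref{compactness_criterion}. Fix a compact interval $[a,b]$ and let $\delta>0$ and $g\in L^1(\R)$ be as furnished by assumption (\ref{assumption_integrability}). Writing
\begin{align*}
g_t(s) := \frac{F(t+s) - \mathbf{1}_{(-\delta,\delta)}(s) F(t)}{s},
\end{align*}
it suffices to show that the family $\{g_t : t\in[a,b]\}$ is a compact subset of $L^1(\R;X)$; Lemma \ref{compactness_criterion} then yields uniform inversion on $[a,b]$, and since $[a,b]$ was arbitrary this gives the claimed locally uniform statement.

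Rather than verifying a Kolmogorov--Riesz-type criterion by hand, I would realize $\{g_t : t\in[a,b]\}$ as the continuous image of the compact set $[a,b]$ under the map $t \mapsto g_t$; continuity of this map into $L^1(\R;X)$ then forces the image to be compact. To set this up I first check that each $g_t$ genuinely lies in $L^1(\R;X)$. On $|s|<\delta$ the indicator is active, so $g_t(s) = (F(t+s)-F(t))/s$, and since $t$ and $t+s$ both range in the compact set $[a-\delta,b+\delta]$, local H\"older continuity supplies an exponent $\alpha\in(0,1]$ and a constant $C>0$ with $\|F(t+s)-F(t)\|\le C|s|^\alpha$; hence $\|g_t(s)\|\le C|s|^{\alpha-1}$, which is integrable near the origin. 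On $|s|\ge\delta$ the indicator vanishes and $\|g_t(s)\| = \|F(t+s)\|/|s| \le g(s)$ by (\ref{assumption_integrability}). Thus $g_t\in L^1(\R;X)$ with a bound uniform in $t\in[a,b]$.

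The crux is the continuity of $t\mapsto g_t$. Given $t_n\to t$ in $[a,b]$, I would argue by dominated convergence on $\|g_{t_n}-g_t\|$: pointwise convergence $g_{t_n}(s)\to g_t(s)$ for a.e.\ $s$ follows from the continuity of $F$ (itself a consequence of local H\"older continuity), while the two estimates just derived combine into the single majorant $h(s) = C|s|^{\alpha-1}\mathbf{1}_{(-\delta,\delta)}(s) + g(s)\mathbf{1}_{\{|s|\ge\delta\}}(s)\in L^1(\R)$, valid simultaneously for all $t_n$ because the H\"older constant on $[a-\delta,b+\delta]$ and the majorant $g$ do not depend on the base point. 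The vector-valued dominated convergence theorem then gives $\|g_{t_n}-g_t\|_{L^1}\to 0$.

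This is exactly the point where assumption (\ref{assumption_integrability}) and local H\"older continuity do their work, and I expect it to be the main (though not deep) obstacle: the H\"older exponent is what tames the $1/s$ singularity at the origin into the integrable majorant $|s|^{\alpha-1}$, and the localization hypothesis is what produces a single $L^1$-majorant at infinity, uniformly over the interval. Once $t\mapsto g_t$ is shown continuous, $\{g_t:t\in[a,b]\}$ is compact in $L^1(\R;X)$ as the continuous image of a compact set, and Lemma \ref{compactness_criterion} closes the argument.
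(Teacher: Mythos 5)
Your proof is correct and follows essentially the same route as the paper: both reduce to Lemma \ref{compactness_criterion} and establish compactness of $\{g_t : t\in[a,b]\}$ in $L^1(\R;X)$ by dominated convergence, using the local H\"older bound $C|s|^{\alpha-1}$ near the origin and the majorant from (\ref{assumption_integrability}) at infinity. The only cosmetic difference is packaging: you phrase compactness as the continuous image of $[a,b]$ under $t\mapsto g_t$, while the paper verifies sequential compactness by extracting a convergent subsequence $t_{n_k}\rightarrow t_0$ and applying the same dominated convergence argument.
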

\begin{proof}
Let $a\leq b$. Fix $\delta>0$.
    In virtue of Proposition \ref{compactness_criterion}, it suffices to show that $$M = \left\{s \mapsto \frac{F(t+s) - \mathbf{1}_{(-\delta,\delta)}(s) F(t)}{s}: t\in [a,b]\right\} $$ is compact in $L^1(\R)$. So, let $(t_n) \subset [a,b]$. Clearly, by compactness of $[a,b]$, there exists $(t_{n_k})$ and $t_0 \in [a,b]$ such that $t_{n_k} \rightarrow t_0$ as $k \rightarrow \infty$. By the H\"older-continuity of $F$ on $[a-\delta,b+\delta]$, there exists some $\alpha \in (0,1]$ and $C>0$ such that 
    \begin{align}
    \label{n_1_integrable_majorant}
       \left|\left| \frac{F(t_{n_k}+s)-\mathbf{1}_{(-\delta,\delta)}(s)F(t_{n_k})}{s}\right|\right| \leq C \left( \mathbf{1}_{(-\delta,\delta)}(s) \frac{1}{|s|^{1-\alpha}} + \mathbf{1}_{\{|s|\geq \delta\}}(s)\frac{1}{|s|}\sup_{k \in \mathbb{N}}||F(t_{n_k}+ s)||\right).
    \end{align}
    The function on the right hand side admits an integrable majorant thanks to  (\ref{assumption_integrability}). On the other hand, for fixed $s\in \R \setminus \{0\}$,
    \begin{align*}
        \frac{F(t_{n_k}+s)-\mathbf{1}_{(-\delta,\delta)}(s)F(t_{n_k})}{s} \rightarrow  \frac{F(t_{0}+s)-\mathbf{1}_{(-\delta,\delta)}(s)F(t_{0})}{s} \text{ as } k \rightarrow \infty
    \end{align*}
    as consequence of continuity of $F$. By (\ref{n_1_integrable_majorant}), the limit function lies also in $L^1(\R;X)$.
Dominated convergence now implies 
\begin{align*}
  \left( s \mapsto  \frac{F(t_{n_k}+s)-\mathbf{1}_{(-\delta,\delta)}(s)F(t_{n_k})}{s} \right)\rightarrow  \left(s \mapsto\frac{F(t_{0}-s)-\mathbf{1}_{(-\delta,\delta)}(s)F(t_{0})}{s}\right) \textrm{ in } L^1(\R;X) \text{ as } k \rightarrow \infty.
\end{align*}
This shows the compactness of $M$ in $L^1(\R;X)$.
\end{proof}

\begin{remark}
We briefly discuss the condition (\ref{assumption_integrability}). 
\begin{enumerate}
    \item If there exist $C>0$ and $\alpha >0$ such that
\begin{align*}
    ||F(s)|| \leq C (|s|+1)^{-\alpha}, \quad s \in \R,
\end{align*}
then (\ref{assumption_integrability}) holds. Indeed, for $a\leq b$, we find 
\begin{align*}
     |s|^{-1}\sup_{t \in [a,b]}||F(t+s)|| \leq C(a,b) (|s|+1)^{-1} \sup_{t \in [a,b]} (|t-s|+1)^{-\alpha}\leq C(a,b) (|s|+1)^{-1-\alpha},
\end{align*}
for all $|s|\geq \delta$ with $\delta = \max\{|a|,|b|\}+1$.
Clearly, the right hand side is an integrable function. 
As a non-trivial example to this condition, one considers a locally Hölder-continuous and integrable function with values $F(t) = \frac{1}{|k|^\frac{1}{2}}$ for $t \in \left[k- \frac{1}{2^{|k|}}, k+ \frac{1}{2^{|k|}}\right]$ and $k\in \mathbb{Z}\setminus \{0\}$. Such a function fulfills the decay assumption only with $0<\alpha \leq \frac{1}{2}$.
\item An example violating the condition in (i) is the function $F(t) = \sum_{n = 1}^\infty \frac{1}{n^2} e^{-|t-e^{n}|^2}$, $t \in \R$, since $F(e^n) \geq \frac{1}{n^2}$ for every $n\in \mathbb{N}$. The function $F$ is clearly Lipschitz-continuous  
 and  integrable. Furthermore, since the sum converges locally uniformly, we observe for $a\leq b$ and $t \in [a,b]$ that
\begin{align*}
   \sup_{t \in [a,b]} |F(t+s)| \leq \sum_{n = 0}^\infty \frac{1}{n^2} \sup_{t \in [a,b]}e^{-|t+s-e^{n}|^2} \leq F(b+s) + F(a+s)
\end{align*}
providing the condition (\ref{assumption_integrability}). Therefore,  by Proposition \ref{proposition_uniform_with_holder}, locally uniform Fourier inversion follows. This shows that the decay assumption in (i) is not necessary.
\item The last example teaches that if $||F||$ is integrable and eventually monotone, i.e. there exists some $C>0$ such that $||F(s)||$ is monotone for $|s|\geq C$, then $F$ satisfies (\ref{assumption_integrability}).
\item It would be interesting to find a function that is Dini-continuous and not H\"older-continuous for which locally uniform Fourier inversion fails.
\item As one only needs (\ref{main_riemann_equation}) to guarantee uniform convergence, one may replace the compactness condition by the slightly weaker assumption
\begin{align*}
 \sup_{t \in [a,b]} \int_\R  \left|\left| \frac{F(t+s) - \mathbf{1}_{(-\delta,\delta)}(s) F(t)}{s} -  \frac{F(t+s - h) - \mathbf{1}_{(-\delta,\delta)}(s+h) F(t)}{s+h}\right|\right| \,ds \rightarrow 0 \text{ as } h\downarrow 0
\end{align*}
in Proposition \ref{compactness_criterion}. 
\end{enumerate}
\end{remark}
\subsection{Uniformity in \texorpdfstring{$\mathbf{\R}^n$}{Lg} with odd \texorpdfstring{$n\geq 3$}{Lg}}
In this section, let $n = 2k+1$ with $k \in \mathbb{N}$. 
Let $x \in \R^n$ and $F: \R^n \rightarrow X$ be  $L^1$-integrable and $k$-times continuously differentiable. With the aid of Lemma \ref{formula_lemma}, we decompose
    \begin{align*}
        S_R(F) &= I_R(F) + II_R(F) + III(R) \textrm{ with } \\
        I_R(F)(x) 
        &=  C_{0,k}(n)\int_0^\infty \frac{\sin(Rr)}{\pi r}\bar{F}_x(r) \,dr, \quad 
        II_R(F)(x) =  \sum_{j = 1}^kC_{j,k}(n) \int_0^\infty \frac{\sin(Rr)}{\pi}r^{j-1} \left(\frac{d}{dr}\right)^j\bar{F}_x(r) \,dr, \\
         III_R(F)(x) &=  \int_0^\infty D_n^R(r) \bar{F}_x(r) (1-\eta_R(r)) r^{n-1} \,dr \\
         &\quad -\sum_{j = 0}^kC_{j,k}(n) \int_0^\infty \frac{\sin(Rr)}{\pi}r^{j-1} \left(\frac{d}{dr}\right)^j(\bar{F}_x(r) (1-\eta_R(r))) \,dr.
    \end{align*}
    Since, the constants $C_{j,k}(n)$ are defined iteratively, they can be explicitly determined. 
    \begin{remark}[Determination of $C_{0,k}(n)$]
    \label{remark_constant}As described in \cite{pinsky_book}, we use our knowledge on              pointwise inversion to show $C_{0,k}(n) = 2$ for each $k\in \mathbb{N}$.
        Considering the Schwartz function $F(x) = e^{-|x|^2}$ (or any other non-trivial Schwartz function), it is known that pointwise Fourier inversion holds since $F$ is in particular Dini-continuous. That is, for every $x \in \R$, it holds $S_R(F)(x) \rightarrow F(x)$ as $R\rightarrow \infty$. On the other hand, $r\mapsto \frac{\bar{F}_x(r) - \mathbf{1}_{(-\delta,\delta)}(r)\bar{F}_x(0)}{r}$ is $L^1$-integrable and therefore by the Riemann-Lebesgue Lemma, we conclude $\int_0^\infty \frac{\sin(Rr)}{\pi r}\bar{F}_x(r) \,dr \rightarrow \frac{1}{2} \bar{F}_x(0) = \frac{1}{2}F(x)$. Furthermore, $II_R(F)(x) \rightarrow 0$ as $R\rightarrow \infty$ since $ r \mapsto r^j \left(\frac{d}{dr}\right)^j\bar{F}_x(r)$ is $L^1$-integrable for $j = 1,...,k$.  Together with Lemma \ref{lemma_universel_zero}, we also find $III_R(F)(x) \rightarrow 0$ as $R\rightarrow \infty$. This implies $F(x) = \frac{1}{2}C_{0,k}(n)F(x)$ and hence $C_{0,k}(n) = 2$ as $F(x)>0$.
    \end{remark}
    We formulate the following compactness criterion.
\begin{proposition}
\label{odd_theorem}
Let $K\subset \R^n$ be compact. Let $F: \R^n \rightarrow X$ be $L^1$-integrable and $k$-times continuously differentiable, and suppose that \begin{align*}
   \left\{r \mapsto \frac{\bar{F}_x(r)-\mathbf{1}_{(0,\delta)}(r)F(x)}{r}: x \in K \right\}, \quad \left\{r \mapsto r^{j-1}\left(\frac{d}{dr}\right)^l \bar{F}_x(r): x \in K,\quad  l = 0,...,j \right\}, \quad j = 1,...,k,
\end{align*}
are compact subsets of $L^1((0,\infty);X)$ for some $\delta>0$.
Then, Fourier inversion holds uniformly in $K$.
\end{proposition}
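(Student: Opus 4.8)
The plan is to treat the three pieces of the decomposition $S_R(F) = I_R(F) + II_R(F) + III_R(F)$ separately and to prove that $I_R(F)(x) \to F(x)$ while $II_R(F)(x) \to 0$ and $III_R(F)(x) \to 0$, in each case uniformly in $x \in K$. The two tools driving the argument are the uniform Riemann--Lebesgue Lemma \ref{uniform_riemann}, applied to the compact families in the hypothesis (extended by zero from $(0,\infty)$ to $\R$), and the fact that relative compactness of a family in $L^1$ forces \emph{equi-tightness}, i.e. $\sup_{x \in K}\int_M^\infty \|f_x(r)\|\,dr \to 0$ as $M \to \infty$; both rest on the Banach space-valued Kolmogorov--Riesz criterion already used in the proof of Lemma \ref{uniform_riemann}.

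For $I_R(F)$ I use $C_{0,k}(n) = 2$ from Remark \ref{remark_constant} and $\bar{F}_x(0) = F(x)$ to split
\begin{align*}
I_R(F)(x) = \frac{2}{\pi}\int_0^\infty \frac{\sin(Rr)}{r}\bigl(\bar{F}_x(r) - \mathbf{1}_{(0,\delta)}(r)F(x)\bigr)\,dr + \frac{2}{\pi}F(x)\int_0^\delta \frac{\sin(Rr)}{r}\,dr.
\end{align*}
The first integral vanishes uniformly in $x \in K$ by Lemma \ref{uniform_riemann}, since $\{r \mapsto r^{-1}(\bar{F}_x(r) - \mathbf{1}_{(0,\delta)}(r)F(x)) : x \in K\}$ is compact by hypothesis. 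In the second, Proposition \ref{prop_properties} (ii) gives $\int_0^\delta \frac{\sin(Rr)}{r}\,dr \to \frac{\pi}{2}$, a scalar limit independent of $x$, so that the term converges to $F(x)$ uniformly because $F$ is bounded on the compact set $K$. For $II_R(F)$, each summand is $\int_0^\infty \frac{\sin(Rr)}{\pi}r^{j-1}\left(\frac{d}{dr}\right)^j \bar{F}_x(r)\,dr$, and the family $\{r \mapsto r^{j-1}\left(\frac{d}{dr}\right)^j \bar{F}_x(r) : x \in K\}$ (the case $l = j$ of the second hypothesis) is compact; Lemma \ref{uniform_riemann} therefore yields $II_R(F)(x) \to 0$ uniformly.

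The term $III_R(F)$ requires the real work. Its first summand is controlled directly by Lemma \ref{lemma_universel_zero}, whose bound $\tfrac{C_n}{R}\|F\|_{L^1}$ is uniform in $x$. For the remaining sum I expand $\left(\frac{d}{dr}\right)^j\bigl(\bar{F}_x(r)(1-\eta_R(r))\bigr)$ by the Leibniz rule into terms $\left(\frac{d}{dr}\right)^l \bar{F}_x(r)\,\left(\frac{d}{dr}\right)^{j-l}(1-\eta_R(r))$ with $0 \le l \le j$. The top term $l = j$ carries the factor $1-\eta_R$, supported in $[R,\infty)$, so its contribution is bounded by $\sup_{x \in K}\int_R^\infty r^{j-1}\|\left(\frac{d}{dr}\right)^j \bar{F}_x(r)\|\,dr$, which tends to $0$ by equi-tightness of the $l = j$ family. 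For $l < j$ the factor $\left(\frac{d}{dr}\right)^{j-l}(1-\eta_R)$ is supported in $[R,R+1]$ and bounded independently of $R$ (the cut-off transitions over a window of fixed length $1$), so the contribution is dominated by $\sup_{x \in K}\int_R^{R+1} r^{j-1}\|\left(\frac{d}{dr}\right)^l \bar{F}_x(r)\|\,dr$, again a tail of the compact family indexed by this $l$, which vanishes uniformly.

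Adding the three estimates gives $S_R(F)(x) \to F(x)$ uniformly in $x \in K$. The anticipated main obstacle is exactly the cut-off boundary terms in $III_R(F)$: because $\eta_R$ and its derivatives live on the moving window $[R,R+1]$, plain dominated convergence is unavailable, and the decisive step is to extract from the $L^1$-compactness hypotheses the uniform tail decay of each family $\{r \mapsto r^{j-1}\left(\frac{d}{dr}\right)^l \bar{F}_x(r)\}$. This is precisely why these families are required for all $0 \le l \le j$, and not only for the top order $l = j$.
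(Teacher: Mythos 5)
Your proof is correct and follows essentially the same route as the paper: the same decomposition $S_R = I_R + II_R + III_R$, Lemma \ref{uniform_riemann} applied to the hypothesized compact families for $I_R$ and $II_R$, and Lemma \ref{lemma_universel_zero} together with the equi-tightness (uniform tail decay) of $L^1$-compact families via the Kolmogorov--Riesz criterion for $III_R$. If anything, you are slightly more careful than the paper, which states $I_R(F)(x)-F(x)$ as an exact integral identity where in fact the term $\frac{2}{\pi}F(x)\int_0^\delta \frac{\sin(Rr)}{r}\,dr \rightarrow F(x)$ must be split off as you do, and which leaves the Leibniz expansion behind (\ref{term_by_product_rule}) implicit.
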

\begin{proof}

Using $\bar{F}_x(0) = F(x)$, we first write
 \begin{align*}
     I_R(F)(x) - F(x) = \frac{2}{\pi}\int_{0}^\infty \frac{\sin(R r)}{r}\left(\bar{F}_x(r) - \mathbf{1}_{(0,\delta)}(r)F(x)\right) \,dr.
 \end{align*}
 By Lemma \ref{uniform_riemann} (extend the functions trivially to $\R$), it holds $I_R(F)(x) - F(x) \rightarrow 0$ as $R\rightarrow \infty$ uniformly in $x \in K$. Using again Lemma \ref{uniform_riemann}, compactness implies $II_R(x) \rightarrow 0$ as $R \rightarrow$ uniformly in $x \in K$. Finally, by Lemma \ref{lemma_universel_zero}, we find \begin{align}
 \label{term_by_product_rule}
     \sup_{x\in K}|III_R(F)(x)|\leq C\left( \sum_{j = 0}^k \sup_{x \in K} \sum_{l = 0}^j \int_R^\infty r^{j-1}  \left|\left(\frac{d}{dr}\right)^l\bar{F}_x(r)\right| \,dr + \frac{1}{R} ||F||_{L^1}\right)
 \end{align}
for a suitable constant $C>0$. The right hand side now converges to $0$ as $R \rightarrow \infty$ due to Kolmogorov's compactness theorem.
\end{proof}

\begin{proposition}
\label{prop_n_odd}
Assume that $F: \R^n \rightarrow X$ is $L^1$-integrable and $k$-times continuously differentiable. If for every compact $K\subset \R^n$ there exist $g \in L^1(\R^n)$ and $\delta>0$ such that
\begin{align}
\label{loc_cond_odd_dim}
    \sup_{\alpha\in \mathbb{N}_0^n, |\alpha| \leq k }\sup_{x \in K}|y|^{-n+k}\left|\left|\nabla^\alpha F(x+y)\right|\right|\leq g(y),
\end{align}
for all $|y|\geq \delta$. Then, Fourier inversion holds locally uniformly.
\end{proposition}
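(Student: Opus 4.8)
The plan is to deduce Proposition \ref{prop_n_odd} directly from the compactness criterion of Proposition \ref{odd_theorem}. Fix a compact set $K\subset\R^n$ together with the associated $g\in L^1(\R^n)$ and $\delta>0$ furnished by (\ref{loc_cond_odd_dim}); it then suffices to check that the finitely many families of functions occurring in Proposition \ref{odd_theorem} are compact in $L^1((0,\infty);X)$ with this $\delta$. Since $L^1$ is metric, I will argue via sequential compactness: given a sequence of indices in such a family, compactness of $K$ yields a subsequence with $x_m\to x_0\in K$ (and, where the index $l$ is also present, a further subsequence with $l$ constant, since $l$ runs over a finite set), after which I show the associated functions converge in $L^1$ to the member attached to $x_0$. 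The two ingredients are an $x$-uniform integrable majorant and pointwise (in $r$) convergence; dominated convergence then closes the argument.

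The pointwise convergence is routine. For fixed $r>0$, continuity of $F$ and of its derivatives up to order $k$, combined with dominated convergence over the finite-measure sphere $\mathbb{S}^{n-1}$, gives $\bar{F}_{x_m}(r)\to\bar{F}_{x_0}(r)$ and likewise for each $\left(\tfrac{d}{dr}\right)^l\bar{F}_{x_m}(r)$. The substance lies in the majorant. Differentiating $\bar{F}_x(r)=\omega_{n-1}^{-1}\int_{\mathbb{S}^{n-1}}F(x+r\omega)\,d\omega$ under the integral sign and applying the chain rule writes $\left(\tfrac{d}{dr}\right)^l\bar{F}_x(r)$ as a sphere average of a finite linear combination of terms $\omega^\alpha\nabla^\alpha F(x+r\omega)$ with $|\alpha|=l$, whence
\begin{align*}
\left\|\Bigl(\tfrac{d}{dr}\Bigr)^l\bar{F}_x(r)\right\|\le\frac{C}{\omega_{n-1}}\sup_{|\alpha|=l}\int_{\mathbb{S}^{n-1}}\left\|\nabla^\alpha F(x+r\omega)\right\|\,d\omega.
\end{align*}

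For the tail $r\ge\delta$ I feed in (\ref{loc_cond_odd_dim}) in the form $\|\nabla^\alpha F(x+r\omega)\|\le r^{\,n-k}g(r\omega)$ (valid since $|y|=r$ for $y=r\omega$ and $|\alpha|\le j\le k$), so the $j$-th weighted integrand $r^{j-1}\left(\tfrac{d}{dr}\right)^l\bar{F}_x(r)$ is bounded by $C\,r^{\,j-1+n-k}\,\omega_{n-1}^{-1}\int_{\mathbb{S}^{n-1}}g(r\omega)\,d\omega$. Because $n=2k+1$ gives $n-k=k+1$ and $j\le k$, the exponent obeys $j-1+n-k=j+k\le 2k=n-1$, so $r^{\,j-1+n-k}\le\delta^{\,j-k}r^{\,n-1}$ for $r\ge\delta$; polar coordinates then turn $\int_\delta^\infty r^{\,n-1}\int_{\mathbb{S}^{n-1}}g(r\omega)\,d\omega\,dr$ into $\|g\|_{L^1(\{|y|\ge\delta\})}\le\|g\|_{L^1}$, an $x$-independent finite bound. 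The same bookkeeping handles the first family: near $0$ the fundamental theorem of calculus gives $\|\bar{F}_x(r)-F(x)\|\le Cr$ uniformly in $x\in K$ (as $\nabla F$ is bounded on a compact neighborhood of $K$), so $r^{-1}(\bar{F}_x(r)-\mathbf{1}_{(0,\delta)}(r)F(x))$ stays bounded on $(0,\delta)$, while on the tail the $\alpha=0$ case of (\ref{loc_cond_odd_dim}) bounds $r^{-1}\|\bar{F}_x(r)\|$ by $\delta^{-k}r^{\,n-1}\omega_{n-1}^{-1}\int_{\mathbb{S}^{n-1}}g(r\omega)\,d\omega$. Finally, on $(0,\delta)$ the derivative families cause no trouble, since $r^{j-1}$ with $j\ge1$ is locally integrable and $\nabla^\alpha F$ is bounded on a compact neighborhood of $K$.

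With these uniform majorants, dominated convergence gives $L^1$-convergence of each family along the extracted subsequence, so the families are compact; Proposition \ref{odd_theorem} then yields uniform Fourier inversion on $K$, and as $K$ was an arbitrary compact set, the inversion is locally uniform. The step I expect to be the main obstacle is precisely this tail power-counting: one must verify that the weights $r^{j-1}$ coming from the decomposition preceding Proposition \ref{odd_theorem}, the differentiation order $|\alpha|\le k$, and the growth allowance $|y|^{-n+k}$ in (\ref{loc_cond_odd_dim}) conspire — through the identity $n=2k+1$ — to make every weighted integrand dominated by $r^{\,n-1}g(r\omega)$, hence integrable against $g$ in polar coordinates. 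The accompanying technical point not to be skipped is the chain-rule reduction of $\left(\tfrac{d}{dr}\right)^l\bar{F}_x$ to the Euclidean derivatives $\nabla^\alpha F$, which is what lets the scalar hypothesis (\ref{loc_cond_odd_dim}) control all the relevant directional derivatives.
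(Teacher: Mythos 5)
Your proposal is correct and follows essentially the same route as the paper: reduce to the compactness criterion of Proposition \ref{odd_theorem}, then verify compactness of each family by sequential compactness and dominated convergence, with the majorant split into a near-zero part (bounded via local regularity of $F$ and its derivatives on a compact neighborhood of $K$) and a tail part controlled by $r^{n-1}\int_{\mathbb{S}^{n-1}}g(r\omega)\,d\omega$ via (\ref{loc_cond_odd_dim}) and polar coordinates. The only difference is presentational: you carry out explicitly the exponent bookkeeping $j-1+n-k=j+k\le n-1$ that the paper absorbs into the constant $C(\delta)$, which is a welcome clarification rather than a deviation.
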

\begin{proof}
 Note that, since $F$ is continuously differentiable, it is locally H\"older-continuous.    Therefore, there exists a constant $C>0$ such that
    \begin{align*}
        \mathbf{1}_{(0,\delta)}(r)\left|\left|\frac{\bar{F}_x(r)-F(x)}{r}\right|\right| \leq \mathbf{1}_{(0,\delta)}(r)\frac{1}{r}\frac{1}{\omega_{n-1}} \int_{\mathbb{S}^{n-1}} ||F(x+r\omega) - F(x)|| \,d\omega \leq \mathbf{1}_{(0,\delta)}(r)C
    \end{align*}
    for every $r\geq 0$ and $x \in K$. Furthermore, we estimate\footnote{We denote positive  constants only depending on $\delta$ by $C(\delta)$.}, for $r \geq \delta$,
    \begin{align}
    \label{majorant_for_F_itself}
        \left|\left|\frac{1}{r}\bar{F}_x(r)\right|\right|\leq\sup_{x \in K} \frac{1}{\omega_{n-1}}\int_{\mathbb{S}^{n-1}} \frac{1}{r}||F(x+r\omega)|| \,d\omega \leq \frac{C(\delta)}{\omega_{n-1}} r^{n-1}\int_{\mathbb{S}^{n-1}} g(r\omega)  \,d\omega
    \end{align}
    where the right hand side is integrable in $[\delta,\infty)$ by (\ref{loc_cond_odd_dim}).
    By dominated convergence, argued as in the proof of Proposition \ref{proposition_uniform_with_holder}, we conclude that
    \begin{align*}
        \left\{r\mapsto \frac{\bar{F}_x(r)-\mathbf{1}_{(-\delta,\delta)}(r)F(x)}{r}\right\}
    \end{align*}
    is compact in $L^1((0,\infty);X)$. Similarly, one obtains the $L^1((0,\infty);X)$-compactness of 
    \begin{align*}
        \left\{r \mapsto r^{j-1}\left(\frac{d}{dr}\right)^l \bar{F}_x(r): x \in K, \quad l = 0,...,j\right\}, \quad j = 1,...,k,
    \end{align*}
    with the help of the observation that, for $r\geq 0$,
    \begin{align*}
        \left|\left|r^{j-1} \left(\frac{d}{dr}\right)^l \bar{F}_{x}(r)\right|\right| &\leq  \frac{1}{\omega_{n-1}} \int_{\mathbb{S}^{n-1}} \sup_{\alpha \in \mathbb{N}_0^k, |\alpha| \leq l} r^{j-1} ||\nabla^\alpha F(x+ r\omega)|| \,d\omega \\
        &\leq C(\delta)\Bigl(\mathbf{1}_{[\delta,\infty]}(r) \frac{1}{\omega_{n-1}} r^{n-1} \int_{\mathbb{S}^{n-1}} g(r\omega) \,d\omega \\
       & \qquad \qquad + \mathbf{1}_{(0,\delta)}(r) \sup_{\alpha \in \mathbb{N}_0^n, |\alpha| \leq l}\, \sup_{x \in K, y \in \mathbb{S}^{n-1}} ||\nabla^\alpha F(x+ ry)|| \Bigr), 
    \end{align*}
    for $j = 1,...,k$ and uniformly in $l = 0,...,j$ and $x \in K$, where we use the lemma about parameter integrals to pull derivatives into the surface integral.
   Putting everything together, the previous proposition yields locally uniform Fourier inversion.
\end{proof}

\begin{remark}[Comparison of conditions]
    For $n= 3$, we compare the conditions of Proposition \ref{prop_n_odd} with the ones from \cite[Proposition 4]{pinsky_paper} yielding convergence for fixed $x \in \R^3$. The localization condition is that   $|\cdot |^{-2}\nabla F(x-\cdot) \in L^1(\R^3)$. Instead of this pointwise assumption, we need an integrable majorant on $\mathbf{1}_{[\delta,\infty)}(\cdot)|\cdot|^{-2}(||F(x+\cdot)|| +\nabla ||F(x+\cdot))||$ uniformly for $x \in K$ and for any compact $K \subset \R^3$. The only further regularity assumption in \cite[Proposition 4]{pinsky_paper} is that $x$ is a Lebesgue point of $F$  which is covered by the property $F \in C^1(\R^3)$.
\end{remark}
\begin{remark}
    We consider exemplarily $n= 3$. To guarantee (\ref{majorant_for_F_itself}), it suffices to assume an integrable majorant on $\mathbf{1}_{[\delta,\infty)}(\cdot)|\cdot|^{-3}||F(x+\cdot)||$ uniformly in $x \in K$. On the other hand, writing out (\ref{term_by_product_rule}) gives rise to the term    $\int_\R \bar{F}_x(r)\left(\frac{1-\eta_R(r)}{r} + \eta_R'(r)\right)\,dr$
    for which we used the stronger localization assumption (\ref{loc_cond_odd_dim}). We do not see how to relax this assumption to the weaker one required for (\ref{majorant_for_F_itself}).
\end{remark}
\subsection{Uniformity in \texorpdfstring{$\mathbf{\R}^n$}{Lg} with even \texorpdfstring{$n\geq 2$}{Lg}}
As motivation, we consider first the case $n = 2$. Fix $x \in \R^2$. Let $F: \R^2\rightarrow X$ be such that $ r \mapsto \bar{F}_x(r)$ is integrable and differentiable with integrable derivative. Using Proposition \ref{prop_properties} (i), (iii), the fact that $\omega_1 = 2\pi$ and integrating by parts, we obtain
\begin{align*}
    S_R(F)(x) &= \int_0^\infty \bar{F}_x(r) RJ_1(Rr) \,dr\\
    &= \bar{F}_{x}(0) + \int_0^\infty \frac{d}{dr} (\bar{F}_x(r)) J_0(Rr) \,dr.
\end{align*}
Let $K\subset \R^2$ be compact. Since $J_0(Rr) \rightarrow 0$ as $R\rightarrow \infty$ for any $r>0$ and $\bar{F}_x(0) = F(x)$, dominated convergence implies $S_R(F)(x) \rightarrow F(x)$ uniformly in $x \in K$ whenever there exists some $g \in L^1((0,\infty))$ such that $\sup_{x \in K}|\frac{d}{dr}\bar{F}_x(r)| \leq g(r)$. For general even dimensions, we make the following statement.

\begin{proposition}
Let $n = 2k$ with $k\in \mathbb{N}$ and $K\subset \R^n$ be compact. Assume $F: \R^n \rightarrow X$ is $L^1$-integrable and $k$-times continuously differentiable. If there exist  $g \in L^1(\R^n)$ and $\delta >0$ such that 
\begin{align*}
  \sup_{\alpha\in \mathbb{N}_0^n, |\alpha| \leq k} \sup_{x \in K}|y|^{-n+k}||\nabla^\alpha F(x+y)||\leq g(y),
\end{align*}
for all $|y|\geq \delta$, then Fourier inversion holds uniformly in $K$.
\end{proposition}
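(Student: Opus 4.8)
The plan is to mimic the motivating computation for $n=2$ and the odd-dimensional argument behind Proposition \ref{prop_n_odd}, but using the even-dimensional identity of Lemma \ref{formula_lemma} (ii) in place of part (i). The crucial structural difference is that here the oscillatory factor can, after one integration by parts, be rewritten through the bounded Bessel function $J_0$: since $RJ_1(Rr)=-\frac{d}{dr}J_0(Rr)$ and $J_0(Rr)\to 0$ pointwise as $R\to\infty$ by Proposition \ref{prop_properties} (iii), the convergence can be driven by dominated convergence rather than by the uniform Riemann-Lebesgue lemma used in the odd case. Concretely, I would start from $S_R(F)(x)=\omega_{n-1}\int_0^\infty r^{n-1}D_n^R(r)\bar{F}_x(r)\,dr$, cf. (\ref{formula_invert}), and apply Lemma \ref{formula_lemma} (ii) to the radial profile $B=\bar{F}_x$; this is legitimate because $F$ is $k$-times continuously differentiable and $L^1$-integrable, so $\bar{F}_x$ is integrable and (inheriting regularity from $F$) $k$-times continuously differentiable. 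Writing $D_2^R(r)=RJ_1(Rr)/(2\pi r)$ turns the $j$-th summand into $\frac{C_{j,k-1}(n)}{2\pi}\int_0^\infty RJ_1(Rr)\,r^{j}(\frac{d}{dr})^j(\bar{F}_x\eta_R)\,dr$, and I would integrate by parts once, converting $RJ_1(Rr)$ into $J_0(Rr)$ and producing a boundary term together with an integral against $J_0(Rr)$.

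Next I would collect the boundary contributions. Owing to the weight $r^{j}$ and the compact support of $\eta_R$, the only surviving term is the one at $r=0$ with $j=0$, which equals $\frac{\omega_{n-1}C_{0,k-1}(n)}{2\pi}\bar{F}_x(0)=\frac{\omega_{n-1}C_{0,k-1}(n)}{2\pi}F(x)$, using $J_0(0)=1$ and $\eta_R(0)=1$. As in Remark \ref{remark_constant}, I would then pin down the leading constant $\frac{\omega_{n-1}C_{0,k-1}(n)}{2\pi}=1$ by testing the formula on a Gaussian $F(x)=e^{-|x|^2}$, for which pointwise inversion is already available; this reduces the proposition to showing that every remaining term tends to $0$ uniformly on $K$.

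The remaining pieces fall into three groups: the $J_0$-integrals over the region $\{\eta_R=1\}$, the corrections where a derivative lands on $\eta_R$ (hence supported in $(R,R+1)$), and the cutoff remainder $\int_0^\infty D_n^R(r)\bar{F}_x(r)(1-\eta_R(r))r^{n-1}\,dr$. The cutoff remainder is dispatched immediately by Lemma \ref{lemma_universel_zero}, with the bound $\frac{C_n}{R}\|F\|_{L^1}$, and the $\eta_R'$-corrections are tail integrals over $(R,R+1)$ estimated exactly as in (\ref{term_by_product_rule}). For the genuine $J_0$-integrals I would exploit the uniform bound $\bigl\|\int_0^\infty J_0(Rr)\,\frac{d}{dr}\bigl(r^{j}(\frac{d}{dr})^j\bar{F}_x\bigr)\,dr\bigr\|\le\int_0^\infty|J_0(Rr)|\,G(r)\,dr$ with an $x$-independent majorant $G\in L^1((0,\infty))$; the right-hand side no longer depends on $x$ and tends to $0$ by dominated convergence (Proposition \ref{prop_properties} (iii)), which is precisely the mechanism producing uniformity on $K$.

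The main obstacle is the construction of the $x$-uniform majorant $G$. Expanding $\frac{d}{dr}\bigl(r^{j}(\frac{d}{dr})^j\bar{F}_x\bigr)$ yields terms $r^{j-1}(\frac{d}{dr})^j\bar{F}_x$ and $r^{j}(\frac{d}{dr})^{j+1}\bar{F}_x$ with $0\le j\le k-1$, and pulling the radial derivatives inside the spherical integral (as in the proof of Proposition \ref{prop_n_odd}) bounds $(\frac{d}{dr})^l\bar{F}_x(r)$ by surface averages of $\nabla^\alpha F(x+r\omega)$ with $|\alpha|\le l\le k$. For $r\ge\delta$ the hypothesis, read as $\|\nabla^\alpha F(x+y)\|\le|y|^{\,n-k}g(y)$, produces the majorant $r^{\,j+n-k}\omega_{n-1}^{-1}\int_{\mathbb{S}^{n-1}}g(r\omega)\,d\omega$, whose radial integral over $[\delta,\infty)$ is finite, being controlled by $\|g\|_{L^1(\R^n)}$ precisely because $j+n-k\le n-1$; this is exactly where the exponent $-n+k$ is sharp, in the same spirit as (\ref{majorant_for_F_itself}). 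For $r<\delta$ the $C^k$-regularity bounds all derivatives, and the non-negative powers $r^{j-1}$ (with $j\ge1$) and $r^{j}$ are integrable near the origin — note that the $j=0$ summand carries the coefficient $j=0$ in front of $r^{j-1}$, so no spurious $r^{-1}$ singularity appears. Matching these weights across all $j$ is the delicate bookkeeping; once it is in place, the three vanishing estimates together with the boundary term yield $S_R(F)(x)\to F(x)$ uniformly on $K$.
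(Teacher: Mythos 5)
Your proposal is correct and takes essentially the same route as the paper's proof: the decomposition from Lemma \ref{formula_lemma} (ii), integration by parts turning $RJ_1(Rr)$ into $J_0(Rr)$, normalization of the leading constant by testing on a Gaussian as in Remark \ref{remark_constant}, the $x$-uniform integrable majorant built from the hypothesis (matching the paper's estimate and the exponent bookkeeping $j+n-k\leq n-1$), dominated convergence via Proposition \ref{prop_properties} (iii), and Lemma \ref{lemma_universel_zero} for the cutoff remainder. The only organizational difference is that you keep $\eta_R$ in the $j=0$ term and extract $F(x)$ as the boundary term of a single integration by parts, whereas the paper splits off $I_R$ without the cutoff, handles it by its introductory $n=2$ computation, and moves the $(1-\eta_R)$ correction into $III_R^2$.
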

\begin{proof}
   With the aid of Proposition \ref{prop_properties} (i) and Lemma \ref{formula_lemma}, we decompose
    \begin{align*}
        S_R(F)(x) = I_R(F)(x) + II_R(F)(x) + III_R(F)(x)
    \end{align*}
    with 
    \begin{align*}
         I_R(F(x)) &= C_{0,k-1}(n)\int_0^\infty RJ_1(Rr) \bar{F}_{x}(r) \,dr,\\
        II_R(F)(x) &=   \sum_{j = 1}^{k-1} C_{j,k-1}(n) \int_0^\infty RJ_1(Rr) r^j \left(\frac{d}{dr}\right)^j (\bar{F}_{x}(r)\eta_R(r))  \,dr, \\
        III_R(F)(x) &=  \frac{1}{2\pi}\int_0^\infty D_n^R(r) \bar{F}_x(r) (1-\eta_R(r)) r^{n-1} \,dr - C_{0,k-1}(n)\int_0^\infty RJ_1(Rr) (\bar{F}_{x}(r)(1-\eta_R(r)) \,dr\\
        &= III_R^1(F)(x) + III_R^2(F)(x),
    \end{align*}
    for universal constants $C_{j,k-1}(n)$, $j = 0,...,k-1$. As in Remark \ref{remark_constant}, one argues that $C_{0,k-1}(n) = 1$. Furthermore, integrating by parts once more and Proposition \ref{prop_properties} (i) yields
    \begin{align*}
        II_R(F)(x) = \sum_{j = 1}^{k-1} C_{j,k-1}(n) \Bigl(&j\int_0^\infty J_0(Rr) r^{j-1} \left(\frac{d}{dr}\right)^j (\bar{F}_{x}(r)\eta_R(r))  \,dr \\
        &+ \int_0^\infty J_0(Rr) r^{j} \left(\frac{d}{dr}\right)^{j+1} (\bar{F}_{x}(r)\eta_R(r))  \,dr\Bigr).
    \end{align*}

    Arguing as in Theorem \ref{odd_theorem}, one shows, for $r\geq 0$,
    \begin{align*}
        \sup_{x \in K}\left|\left|r^{j} \left(\frac{d}{dr}\right)^{l+1} (\bar{F}_{x}(r)\eta_R(r))\right|\right|  &\leq C(\delta)\Bigl(\mathbf{1}_{[\delta,\infty]}(r) \frac{1}{\omega_{n-1}} r^{n-1} \int_{\mathbb{S}^{n-1}} g(r\omega) \,d\omega \\
       & \qquad \qquad + \mathbf{1}_{(0,\delta)}(r) \sup_{\alpha \in \mathbb{N}_0^n, |\alpha| \leq l}\, \sup_{x \in K, y \in \mathbb{S}^{n-1}} ||\nabla^\alpha F(x+ ry)|| \Bigr),
    \end{align*}
    for $l = 0,...,j$ and $j = 0,...,k-1$, where the right-hand side is an integrable function on $(0,\infty)$. The introductory considerations for $n = 2$ with $j = 0$ give $I_R(F)(x) \rightarrow F(x)$ uniformly in $x \in K$. On the other hand, recalling again that $J_0(Rr) \rightarrow 0$ as $R\rightarrow \infty$ for any $r>0$, dominated convergence also implies $II_R(F)(x) \rightarrow 0$ as $R \rightarrow \infty$ uniformly in $x \in K$. Similarly, it follows $III_R^2(F)(x) \rightarrow 0$ as $R \rightarrow \infty$ uniformly in $x \in K$. Finally, by Lemma \ref{lemma_universel_zero}, we find $III_R^1(F)(x) \rightarrow 0$ as $R \rightarrow \infty$ uniformly in $x \in K$.
\end{proof}
\section{Applications to Laplace transforms}
\subsection{From Fourier to Laplace inversion}
\label{identify}
\begin{lemma}
   Let $0 \leq a\leq b$ and $R\geq 1$. Let $F: [0,\infty) \rightarrow X$  have exponential growth bound $\omega_0\in \R$. Fix $\omega>\omega_0$. Define the function $\tilde{F}:\R \rightarrow X$ by $$\Tilde{F}(s) = e^{-\omega s}\begin{cases} F(s), & \text{if } s\geq 0\\
    0 & \text{else}.\end{cases}$$
    Then,
\begin{align*}
F(t) = \lim\limits_{R\rightarrow \infty} \frac{1}{2\pi i} \int\limits_{\omega - i R}^{\omega+iR} e^{\lambda t} \mathcal{L}(F)(\lambda) \,d\lambda \textrm{ uniformly in } t \in [a,b]
\end{align*}   
    is equivalent to 
    \begin{align*}
        \Tilde{F}(t) = \lim_{R \rightarrow \infty} \frac{1}{2\pi }\int^{R}_{-R} e^{itk} \mathcal{F}(\Tilde{F})(k) \,dk \text{ uniformly in } t \in [a,b].
    \end{align*}
\end{lemma}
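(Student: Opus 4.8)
The plan is to identify the vertical-line Laplace integral with the Fourier integral of $\tilde F$ by parametrizing the contour, so the entire statement reduces to a change of variables plus a bookkeeping argument about the factor $e^{\omega t}$. First I would record that $\tilde F \in L^1(\R;X)$: for $s\geq 0$ we have $\|\tilde F(s)\| = e^{-\omega s}\|F(s)\| \leq C e^{-(\omega-\omega_0)s}$, which is integrable on $\R_+$ because $\omega>\omega_0$, while $\tilde F \equiv 0$ on $(-\infty,0)$. Hence $\mathcal F(\tilde F)$ is well-defined, and $\mathcal L(F)(\lambda)$ exists on the line $\Re(\lambda)=\omega$.

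The key computational step is to parametrize the contour by $\lambda=\omega+ik$ with $k\in[-R,R]$, so that $d\lambda = i\,dk$ and $e^{\lambda t}=e^{\omega t}e^{ikt}$. Evaluating the Laplace transform on this line gives
\begin{align*}
\mathcal L(F)(\omega+ik) = \int_0^\infty e^{-(\omega+ik)s}F(s)\,ds = \int_\R e^{-iks}\tilde F(s)\,ds = \mathcal F(\tilde F)(k),
\end{align*}
since $\tilde F(s)=e^{-\omega s}F(s)$ vanishes for $s<0$. Substituting this into the Laplace inversion integral yields the pointwise identity
\begin{align*}
\frac{1}{2\pi i}\int_{\omega-iR}^{\omega+iR} e^{\lambda t}\mathcal L(F)(\lambda)\,d\lambda = e^{\omega t}\,\frac{1}{2\pi}\int_{-R}^{R} e^{ikt}\mathcal F(\tilde F)(k)\,dk,
\end{align*}
valid for every $t$ and every $R\geq 1$.

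Finally I would transfer uniform convergence across this identity. On the compact interval $[a,b]$ the factor $e^{\omega t}$ is continuous, positive, and bounded above and below away from zero, so multiplication and division by $e^{\omega t}$ preserve uniform convergence in both directions. Since $a\geq 0$, every $t\in[a,b]$ satisfies $\tilde F(t)=e^{-\omega t}F(t)$; hence the right-hand integral converges to $\tilde F(t)$ uniformly on $[a,b]$ if and only if the left-hand side converges to $F(t)=e^{\omega t}\tilde F(t)$ uniformly there, which is exactly the claimed equivalence. I do not expect a serious obstacle: the only points requiring care are the integrability of $\tilde F$ (which is where $\omega>\omega_0$ enters) and the elementary but essential observation that the bounded, bounded-below factor $e^{\omega t}$ leaves the notion of uniform convergence on the compact interval intact.
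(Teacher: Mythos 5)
Your proposal is correct and follows essentially the same route as the paper: parametrize the contour as $\lambda=\omega+ik$, identify $\mathcal{L}(F)(\omega+ik)=\mathcal{F}(\tilde F)(k)$, and pull out the factor $e^{\omega t}$, which is bounded above and below on $[a,b]$ so that uniform convergence transfers in both directions. Your added remarks on the integrability of $\tilde F$ make explicit what the paper leaves implicit, but the argument is the same.
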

\begin{proof}
    Let $t \in [a,b]$ and $R\geq 1$. Rewriting
    \begin{align*}
        \frac{1}{2\pi i} \int\limits_{\omega - i R}^{\omega+iR} e^{\lambda t} \mathcal{L}(F)(\lambda) \,d\lambda &= \frac{1}{2\pi i } \int_{-iR}^{iR} e^{(\lambda + \omega)t} \int_0^\infty e^{-(\lambda + \omega)s} F(s) \,ds \,d \lambda \\
        &=\frac{1}{2\pi} e^{\omega t} \int_{-R}^{R} e^{ikt} \int_0^\infty e^{-iks} e^{-\omega s} F(s) \,ds \,d k \\
        &= e^{\omega t}\frac{1}{2\pi }\int^{R}_{-R} e^{itk} \mathcal{F}(\Tilde{F})(k) \,dk,
        \end{align*}
        shows the claim.
\end{proof}
As immediate consequence, we can translate Proposition \ref{proposition_uniform_with_holder} in terms of the Laplace transform and prove our main result:
\begin{proof}[Proof of Proposition \ref{prop_h_loc}]
    Let $\omega > \omega_0$ and  $\Tilde{F}$ as in Lemma \ref{identify}. Then $||\Tilde{F}(s)|| \leq e^{(\omega_0 - \omega)s}$ for $s>0$ and $\Tilde{F}(s) = 0$ for $s\leq 0$ which show that $\Tilde{F}$ satisfies (\ref{assumption_integrability}). Furthermore, we have $\Tilde{F}\in \Hloc(\R_+;X)$. This implies the assertion by Proposition \ref{proposition_uniform_with_holder} and Lemma \ref{identify}.
\end{proof}
\subsection{Application to \texorpdfstring{$C_0$-semigroups}{Lg}}
\label{section_semigroup}
 Let $A: D(A) \subset X \rightarrow X$ be  the generator of a $C_0$-semigroup $S(t)$ with exponential growth bound $\omega_0 < 0$. We recall the following well-known result on the complex inversion formula which is a consequence of Proposition \ref{know_result}, see  also \cite[5.15 Corollary]{nagel}).
\begin{corollary}
    If $\omega > \omega_0$ and $x \in D(A)$, then 
\begin{align}
S(t)x = \frac{1}{2\pi i} \lim\limits_{R\rightarrow \infty} \int\limits_{\omega - i R}^{\omega+iR} e^{\lambda t} (\lambda -A)^{-1}x \,d\lambda \textrm{ locally uniformly in } t \in (0,\infty)
\end{align}
in $X$-norm.
\end{corollary}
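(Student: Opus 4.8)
The plan is to read the complex inversion integral as the Laplace inversion of the orbit $F(t):=S(t)x$ and to reduce the statement to Proposition \ref{prop_h_loc}. The starting point is the classical identity
\[
(\lambda-A)^{-1}x=\int_0^\infty e^{-\lambda s}S(s)x\,ds=\mathcal{L}(S(\cdot)x)(\lambda),\qquad \Re(\lambda)>\omega_0 ,
\]
so that the integrand in the corollary is exactly $e^{\lambda t}\mathcal{L}(F)(\lambda)$ with $F=S(\cdot)x$. For $x\in D(A)$ the orbit is continuously differentiable with $\frac{d}{dt}S(t)x=S(t)Ax$, and since the growth bound $\omega_0$ is negative both $\|S(t)x\|$ and $\|S(t)Ax\|$ stay bounded; hence $F\in\Lip(\R_+;X)$, which is in particular locally H\"older-continuous, so $F\in\Hloc(\R_+;X)$. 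The only obstruction to a direct appeal to Proposition \ref{prop_h_loc} is that $F(0)=x\neq 0$.

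To remove this obstruction I would subtract a scalar multiple of $x$ whose inverse transform is explicit. Fix any $\mu<\omega_0$ and set $G(t):=S(t)x-e^{\mu t}x$ for $t\geq 0$. Then $G(0)=0$, and $G$ is again Lipschitz, since the correction $e^{\mu t}x$ has bounded derivative $\mu e^{\mu t}x$; moreover $G$ has growth bound at most $\omega_0$ (the term $e^{\mu t}x$ decays strictly faster). Thus $G\in\Hloc(\R_+;X)$ with $G(0)=0$, and its Laplace transform is
\[
\mathcal{L}(G)(\lambda)=(\lambda-A)^{-1}x-\frac{1}{\lambda-\mu}\,x,\qquad \Re(\lambda)>\omega_0 ,
\]
where the second term converges on the line $\Re(\lambda)=\omega$ because $\omega>\omega_0>\mu$. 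Proposition \ref{prop_h_loc}, applied to $G$, then yields for every $\omega>\omega_0$ that $G(t)=\lim_{R\to\infty}\frac{1}{2\pi i}\int_{\omega-iR}^{\omega+iR}e^{\lambda t}\mathcal{L}(G)(\lambda)\,d\lambda$ locally uniformly on $\R_+$.

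The final step is to split this integral according to the decomposition of $\mathcal{L}(G)$ and to evaluate the elementary scalar piece. The classical Bromwich computation gives
\[
\frac{1}{2\pi i}\int_{\omega-iR}^{\omega+iR}\frac{e^{\lambda t}}{\lambda-\mu}\,d\lambda\;\longrightarrow\; e^{\mu t}\qquad(R\to\infty),
\]
and I expect this convergence to be uniform on every compact subinterval of $(0,\infty)$: closing the contour to the left and applying a Jordan-type estimate, the only pole $\lambda=\mu$ contributes the residue $e^{\mu t}$. Substituting back, for $t>0$ the two copies of $e^{\mu t}x$ cancel and one recovers $S(t)x=\lim_{R\to\infty}\frac{1}{2\pi i}\int_{\omega-iR}^{\omega+iR}e^{\lambda t}(\lambda-A)^{-1}x\,d\lambda$ locally uniformly in $t\in(0,\infty)$.

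I anticipate the main subtlety to be precisely this scalar Bromwich limit. Its value is $e^{\mu t}$ only for $t>0$, while at $t=0$ the same integral tends to $\tfrac12$. This jump is exactly what forces the restriction to $(0,\infty)$ in the statement, since at $t=0$ the inversion integral would return $\tfrac12 x$ rather than $S(0)x=x$. Verifying that the convergence of the scalar piece is genuinely \emph{uniform} away from the origin, and not merely pointwise, is the one place where a careful contour estimate is needed, as opposed to a direct invocation of Proposition \ref{prop_h_loc}; everything else (the Lipschitz bound and the growth bound of $G$) is routine and follows from $\omega_0<0$ together with $x\in D(A)$.
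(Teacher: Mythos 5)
Your proof is correct, but it takes a genuinely different route from the paper's treatment of this statement. The paper does not re-prove this corollary at all: it records it as a known consequence of Proposition~\ref{know_result} (the Lipschitz inversion theorem, proved in \cite{arendt} via the Riesz--Stieltjes representation), citing also \cite[5.15 Corollary]{nagel}; the classical derivation behind that citation works with $F(t)=S(t)x-x$, whose transform is $(\lambda-A)^{-1}x-x/\lambda$. You instead derive the statement from the paper's new Proposition~\ref{prop_h_loc}, applied to $G(t)=S(t)x-e^{\mu t}x$ with $\mu<\omega_0$. This buys two things: the argument stays entirely inside the paper's own machinery, and it yields the full range $\omega>\omega_0$ at once, whereas Proposition~\ref{know_result} is stated only for $\omega>0$, so reaching $\omega\in(\omega_0,0]$ along the classical route needs an additional contour-deformation argument (using $(\lambda-A)^{-1}x=\lambda^{-1}\bigl(x+(\lambda-A)^{-1}Ax\bigr)$ for decay on horizontal segments). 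In fact, your argument is precisely the template the paper itself uses later for the more general Corollary~\ref{cor_favard}: there one takes $x\in F_\alpha$, subtracts $e^{\omega_0 t}x$ instead of $e^{\mu t}x$, and invokes Proposition~\ref{prop_h_loc} together with the same scalar Bromwich limit; your proof is the special case $x\in D(A)\subset F_1$, measured only in the $X$-norm. Regarding the step you flag as delicate, the locally uniform convergence of $\frac{1}{2\pi i}\int_{\omega-iR}^{\omega+iR}e^{\lambda t}(\lambda-\mu)^{-1}\,d\lambda$ to $e^{\mu t}$ on compact subsets of $(0,\infty)$ is exactly the fact the paper quotes from \cite{arendt} in the proof of Corollary~\ref{cor_favard}, so it may simply be cited; alternatively, your Jordan-type estimate does give uniformity, since the arc contribution is $O\bigl(1/(Rt)\bigr)\leq O\bigl(1/(Ra)\bigr)$ for $t\in[a,b]\subset(0,\infty)$. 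One minor technicality, which you share with the paper's own proof of Corollary~\ref{cor_favard}: a $C_0$-semigroup need not attain its growth bound, so ``$G$ has growth bound at most $\omega_0$'' should be read as ``growth bound $\omega'$ for every $\omega'>\omega_0$'' in the sense of the paper's definition; since, given $\omega>\omega_0$, one may apply Proposition~\ref{prop_h_loc} with any $\omega'\in(\omega_0,\omega)$, the conclusion for all $\omega>\omega_0$ is unaffected.
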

Using now Proposition \ref{prop_h_loc}, we may extend this result to Favard spaces.
For this purpose, let $\alpha \in (0,1]$. We define, see \cite[Section 5 b]{nagel}, the Favard space
\begin{align}
F_\alpha := \{x \in X: ||x||_{F_\alpha} < \infty\}, \quad ||x||_{F_\alpha} := \sup_{t>0}\frac{1}{t^\alpha}||S(t)x-x||_X.
\end{align}
Note that
$D(A) \subset F_\alpha \subset F_{\alpha'} \hookrightarrow X$ for any $0 < \alpha' \leq \alpha \leq 1$.\footnote{If $X$ is reflexive, then $D(A) = F^1$, c.f \cite[5.21 Corollary]{nagel}.}
The abstract H\"older space is given by
\begin{align*}
X_\alpha := \{x \in F_\alpha: \lim\limits_{t\downarrow 0} ||S(t)x-x||_{F_\alpha} = 0\}.
\end{align*}
With \cite[5.15 Theorem]{nagel}, we have following properties: both spaces $X_\alpha$ and $F_\alpha$ are Banach spaces with respect to $||\cdot||_{F_\alpha}$.
Furthermore, the restriction $S_{|F_\alpha}: [0,\infty) \rightarrow B(F_\alpha)$ is a semigroup of bounded operators and the restriction $S_{|X\alpha}: [0,\infty) \rightarrow B(X_\alpha)$ is a $C_0$-semigroup with generator $A_{|X_\alpha}$ and domain  $D(A_{|X_\alpha}) = X_{\alpha+1}$. 

\begin{corollary}
\label{cor_favard}
If $\omega > \omega_0$ and $x \in F_\alpha$, then
\begin{align}
\label{semigroup_inverse_with_zero}
S(t)x-e^{\omega_0 t}x = \frac{1}{2\pi i} \lim\limits_{R\rightarrow \infty} \int\limits_{\omega - i R}^{\omega+iR} e^{\lambda t} \left((\lambda -A)^{-1}-\frac{1}{\lambda- \omega_0}\right)x \,d\lambda \textrm{ locally uniformly in } t \in [0,\infty)
\end{align}
and
\begin{align}
\label{semigroup_inverse}
S(t)x = \frac{1}{2\pi i} \lim\limits_{R\rightarrow \infty} \int\limits_{\omega - i R}^{\omega+iR} e^{\lambda t} (\lambda -A)^{-1}x \,d\lambda \textrm{ locally uniformly in } t \in (0,\infty)
\end{align}
in $X$- and any $F_{\alpha'}$-norm with $\alpha' \in (0,\alpha)$.
\end{corollary}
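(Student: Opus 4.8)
The plan is to derive (\ref{semigroup_inverse_with_zero}) as a direct application of Proposition~\ref{prop_h_loc} and then to obtain (\ref{semigroup_inverse}) and the $F_{\alpha'}$-refinement from it. First I would fix $\omega>\omega_0$ and $x\in F_\alpha$ and set $F(t):=S(t)x-e^{\omega_0 t}x$. Since the Laplace transform of $t\mapsto S(t)x$ is the resolvent $(\lambda-A)^{-1}x$ and that of $t\mapsto e^{\omega_0 t}x$ is $(\lambda-\omega_0)^{-1}x$ for $\Re(\lambda)>\omega_0$, the function $F$ has Laplace transform $\bigl((\lambda-A)^{-1}-(\lambda-\omega_0)^{-1}\bigr)x$, which is exactly the integrand in (\ref{semigroup_inverse_with_zero}). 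Moreover $F(0)=S(0)x-x=0$, and with $M:=\sup_{t\geq 0}\|S(t)\|<\infty$ (finite since $\omega_0<0$) we have $\|F(t)\|\leq(M+1)e^{\omega_0 t}\|x\|$, so $F$ has exponential growth bound at most $\omega_0$.

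Next I would check that $F\in\Hloc(\R_+;X)$. The only nontrivial point is the local Hölder-continuity of $t\mapsto S(t)x$: for $0\leq s\leq t$ in a compact interval one writes $S(t)x-S(s)x=S(s)(S(t-s)-I)x$ and uses the defining bound $\|(S(t-s)-I)x\|\leq\|x\|_{F_\alpha}(t-s)^\alpha$ together with $\|S(s)\|\leq M$, giving $\|S(t)x-S(s)x\|\leq M\|x\|_{F_\alpha}(t-s)^\alpha$. Since $t\mapsto e^{\omega_0 t}x$ is smooth, $F$ is locally Hölder-continuous with $F(0)=0$, and Proposition~\ref{prop_h_loc} yields (\ref{semigroup_inverse_with_zero}) in $X$-norm, locally uniformly on $[0,\infty)$.

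To pass to (\ref{semigroup_inverse}) I would add the scalar Bromwich inversion of the removed term. A direct residue computation (or Fourier inversion of the function $r\mapsto e^{-(\omega-\omega_0)r}\mathbf{1}_{(0,\infty)}(r)$, whose Fourier transform is $k\mapsto(\omega-\omega_0+ik)^{-1}$) shows $\frac{1}{2\pi i}\int_{\omega-iR}^{\omega+iR}e^{\lambda t}(\lambda-\omega_0)^{-1}x\,d\lambda\to e^{\omega_0 t}x$ as $R\to\infty$, uniformly on compact subsets of $(0,\infty)$; the restriction to $t>0$ reflects the jump of the inverse transform at the origin. Adding this limit to (\ref{semigroup_inverse_with_zero}) and recombining the integrands into $(\lambda-A)^{-1}x$ gives (\ref{semigroup_inverse}), locally uniformly on $(0,\infty)$.

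Finally, for convergence in $F_{\alpha'}$-norm with $\alpha'\in(0,\alpha)$, I would rerun the whole argument inside the Banach space $X_{\alpha'}$, on which $S$ restricts to a $C_0$-semigroup with generator $A_{|X_{\alpha'}}$ and resolvent $(\lambda-A)^{-1}|_{X_{\alpha'}}$. The crux is the embedding $F_\alpha\hookrightarrow X_{\alpha'}$ with a quantitative modulus, namely
\[\|(S(h)-I)x\|_{F_{\alpha'}}=\sup_{\tau>0}\tau^{-\alpha'}\|(S(\tau)-I)(S(h)-I)x\|\leq C\,h^{\alpha-\alpha'}\|x\|_{F_\alpha}.\]
I would prove this by splitting the supremum: for $\tau\leq h$ one bounds the inner expression by $(M+1)\|x\|_{F_\alpha}\tau^\alpha$ and uses $\tau^{\alpha-\alpha'}\leq h^{\alpha-\alpha'}$, while for $\tau>h$ one estimates $\|(S(\tau)-I)(S(h)-I)x\|\leq(M+1)\|x\|_{F_\alpha}h^\alpha$ and uses $\tau^{-\alpha'}<h^{-\alpha'}$. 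This simultaneously shows $x\in X_{\alpha'}$ and that $t\mapsto S(t)x$ is locally Hölder-continuous (with exponent $\alpha-\alpha'$) in $F_{\alpha'}$-norm, so $F$ satisfies the hypotheses of Proposition~\ref{prop_h_loc} as an $X_{\alpha'}$-valued function; since the $X_{\alpha'}$-norm is $\|\cdot\|_{F_{\alpha'}}$, both formulas then hold in $F_{\alpha'}$-norm. The main obstacle is precisely this $F_{\alpha'}$-Hölder estimate, together with verifying that the resolvent and Laplace transform are consistent under restriction to $X_{\alpha'}$; the $X$-norm statements and the scalar Bromwich computation are routine by comparison.
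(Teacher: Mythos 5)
Your proposal is correct and takes essentially the same route as the paper: the same function $F(t)=S(t)x-e^{\omega_0 t}x$ fed into Proposition~\ref{prop_h_loc}, the same splitting of the Favard supremum at $\tau=h$ (using that $S(\tau)-I$ and $S(h)-I$ commute) to obtain the $h^{\alpha-\alpha'}$ modulus, and the same scalar Bromwich inversion of $(\lambda-\omega_0)^{-1}$ to pass from (\ref{semigroup_inverse_with_zero}) to (\ref{semigroup_inverse}). The only difference is cosmetic: you package the H\"older estimate as the embedding bound $\|(S(h)-I)x\|_{F_{\alpha'}}\le C h^{\alpha-\alpha'}\|x\|_{F_\alpha}$ and run Proposition~\ref{prop_h_loc} in the ambient space $X_{\alpha'}$, whereas the paper estimates $\|S(t_2)x-S(t_1)x\|_{F_{\alpha'}}$ directly and works in $F_{\alpha'}$.
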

\begin{proof}
Set $F(t) = S(t)x-e^{\omega_0 t}x$ and note that $F$ has growth bound $\omega_0$,  $F(0) = 0$ and $$\mathcal{L}(F)(\lambda) = (\lambda-A)^{-1}x - \frac{x}{\lambda- \omega_0}.$$ Obviously, $t \mapsto e^{\omega_0 t}x$ is locally H\"older-continuous and we consider $G(t) = S(t)x$. The function $G$ is H\"older-continuous from $[0,\infty)$ to $X$. In fact, for $t_2>t_1\geq 0$, we find
\begin{align*}
    ||G(t_1)-G(t_2)||_X \leq ||S(t_1)||_{X \rightarrow X} ||S(t_2- t_1)x - x ||_X \leq C_1 |t_2-t_1|^{\alpha} ||x||_{F_{\alpha}}
\end{align*}
for some constant $C_1>0$ independent of $t_2$ and $t_1$ and using that $\omega_0 <0$. 
Now, let $\alpha' \in (0,\alpha)$ and $t_2> t_1 \geq 0$.  With $\omega_0 <0$, we estimate
\begin{align*}
   ||G(t_2)-G(t_1)||_{F_{\alpha'}} &= \sup_{t>0}\frac{1}{t^{\alpha'}}||S(t_1)\left(S(t)(S(t_2-t_1)x- x)  - (S(t_2-t_1)x- x)\right)||_X \\
   &\leq C_1 |t_2-t_1|^{\alpha-\alpha'} \sup_{t_2-t_1 \geq t>0} \frac{1}{t^{\alpha}}||(S(t_2-t_1) (S(t)x- x)  -(S(t) x-x)||_X \\
   &\quad+C_1 |t_2-t_1|^{\alpha-\alpha'} \sup_{t \geq t_2-t_1} \frac{1}{(t_2-t_1)^{\alpha}}||S(t)(S(t_2-t_1)x- x)  - (S(t_2-t_1)x- x)||_X \\
   &\leq C_2|t_2-t_1|^{\alpha-\alpha'}||x||_{F_\alpha},
\end{align*}
for some constants $C_{1,2}>0$ independent of $t_2$ and $t_1$. Similarly, one estimates $$||(e^{\omega_0 t_2}- e^{\omega_0 t_1})x||_{F^{\alpha'}} \leq C(a,b) (t_2-t_1)^{\alpha- \alpha'}||x||_{F^{\alpha}},$$ for every $0\leq a\leq t_1\leq t_2 \leq b$.
Therefore, (\ref{semigroup_inverse_with_zero}) follows by Proposition \ref{prop_h_loc} for $\omega>\omega_0$. Since $$\frac{1}{2\pi i} \lim_{R \rightarrow \infty} \int_{\omega -iR}^{\omega+iR} e^{\lambda t}\frac{x}{\lambda- \omega_0} \,d\lambda = e^{\omega_0 t}x,\quad \omega>\omega_0,$$ locally uniformly for $t \in (0,\infty)$, see e.g. \cite{arendt}, (\ref{semigroup_inverse}) follows from (\ref{semigroup_inverse_with_zero}).
\end{proof}
%
%
Assume that $F_\alpha \neq X_\alpha$. Then, the restriction $S_{|F_\alpha}$ is not strongly continuous on $F_\alpha$ and, on the other hand,
\begin{align}
\label{integral_laplace_cont}
t \mapsto  \frac{1}{2\pi i} \int\limits_{\omega - i R}^{\omega+iR} e^{\lambda t} \left((\lambda -A)^{-1}-\frac{1}{\lambda-\omega_0}\right)x \,d\lambda    
\end{align}
 is a continuous mapping from $[0,\infty)$ to $F_\alpha$ for every $R\geq 1$ and $x\in F_\alpha$. Therefore, the convergence in (\ref{semigroup_inverse_with_zero}) cannot hold for every $x \in F_\alpha$ with respect to the $F_\alpha$-norm. 
 
We justify the continuity of (\ref{integral_laplace_cont}). For this purpose, we fix $x\in X$ and set $y(\lambda) = (\lambda-A)^{-1}x$ which is in particular a continuous function in $\lambda$ from $\rho(A)$ to $X$.  Let $\Re(\lambda) = \omega>\omega_0$. We first estimate
\begin{align*}
    ||S(t)y(\lambda) - y(\lambda)||_X = ||\int_0^t S(s)Ay(\lambda)\,ds||_X \leq C_1 t ||Ay(\lambda)||_X 
\end{align*}
for some constant $C_1>0$. Furthermore, we use $$Ay(\lambda) = \lambda y(\lambda) -x = \lambda \int_0^\infty e^{-(\lambda -\omega_0)s }e^{-\omega_0 s}F(s)\,ds + \left(\frac{\lambda}{\lambda-\omega_0}-1\right)x$$ with $F$ as in the proof of Corollary \ref{cor_favard} to estimate
\begin{align*}
  ||Ay(\lambda)||_X \leq  C_2\left(|\lambda| + \frac{|\omega_0|}{|\lambda-\omega_0|}\right) ||x||_X
\end{align*}
for some $\omega$-dependent constant $C_2 >0$.
We conclude that
there exists some $\omega$-dependent constant $C_3>0$ such that
\begin{align*}
 ||y(\lambda)||_{F_\alpha} &\leq \sup_{t>0} \left(\frac{1}{t} ||S(t)y(\lambda)-y(\lambda)||_X\right)^\alpha||S(t)y(\lambda)- y(\lambda)||_X^{1-\alpha}
 \leq C_3 ||y(\lambda)||_X^{1-\alpha}\left(1+|\lambda|\right)^\alpha ||x||_X^\alpha.
\end{align*}
 This yields the integrability of $\lambda \mapsto \left((\lambda-A)^{-1} - \frac{1}{\lambda-\omega_0}\right)x$ in $F_\alpha$ on every $\{\omega + ik: k\in [-R,R]\}$ with  $R\geq1$. Thereby, the continuity of (\ref{integral_laplace_cont}) follows for every $R\geq 1$ by the lemma about parameter integrals.
If $S$ is a $C_0$-group, then there exists $x\in F^\alpha$ such that (\ref{semigroup_inverse}) also fails immediately by the same argument.

\paragraph*{Oulook.} The considerations of this section beg the question of whether we get convergence in $F_\alpha$-norm for every $x \in X_\alpha$. More generally, one may ask whether for every Banach space $X$ and $C_0$-semigroup (or $C_0$-group) $S$, there exists a (largest) non-empty Banach space $Y \hookrightarrow X$ such that for every $x \in Y$ the convergence in (\ref{semigroup_inverse_with_zero}) (or (\ref{semigroup_inverse})) holds in $Y$-norm. How does $X$ or $S$ have to be specified and how does $Y$ look like?  We leave these questions open for future research. 
As an example of such a specification, we mention:  for every $C_0$-semigroup and if $X$ is a UMD-space, then locally uniform convergence of the complex inversion formula is positively answered choosing $Y = X$ as shown in \cite{UMD}.

For another straightforward example, consider a $C_0$-semigroup $S$ with generator $A$ such that $\textrm{ker}(A) \neq \emptyset$. By the Hille-Yosida theorem, $S$ has  necessarily growth bound $\omega_0\geq 0$ and one chooses $Y = \textrm{ker}(A)$ equipped with the norm $||\cdot||_X$ as non-trivial choice for the general question.

\appendix
\section{Function spaces}
\label{appendix}
Let $I \in \{ \R^n, \R_+\}$ with $\R_+ = [0,\infty)$ and $n\in \mathbb{N}$. We introduce the following spaces
\begin{align*}
&\Lip(I;X) := \{f: I \rightarrow X \textrm{ is Lipschitz-continuous}\}, \\
&\Hloc(I;X) := \{f: I \rightarrow X \textrm{ is locally H\"older-continuous}\}, \\
&\textrm{Din}(I;X) := \{f: I \rightarrow X \textrm{  is Dini-continuous}\}, \\
&C(I;X) := \{f: I \rightarrow X \textrm{ is continuous}\}, 
\end{align*}
where we call $F: I \rightarrow X$ Dini-continuous at point $t_0 \in I$ if there exists some $\delta_0>0$ such that
\begin{align*}
\int\limits_{(-\delta_0,\delta_0)^n} \frac{||F(t_0 + h)- F(t_0)||}{|h|} \,dh < \infty.
\end{align*}
The function $F$ is called Dini-continuous if it is Dini-continuous in every point.
We call a function $F:I \rightarrow X$ locally H\"older-continuous, if for every compact subset $K$ of $I$ there exist an $\alpha_0 = \alpha_0(K) \in (0,1]$ and $C = C(K)>0$ such that
\begin{align*}
 ||F(x)-F(y)||\leq C|x-y|^{\alpha_0} \textrm{ for all } x,y\in K.
\end{align*} Obviously, we have the inclusions
\begin{align*}
\Lip(I;X) \subset \Hloc(I;X) \subset \Dini(I;X) \subset C(I;X).
\end{align*}
\bibliographystyle{abbrv}
\bibliography{refs}

\begin{thebibliography}{1}

\bibitem{adams}
R.~A. Adams and J.~J.~F. Fournier.
\newblock {\em Sobolev spaces}, volume 140 of {\em Pure and Applied Mathematics
  (Amsterdam)}.
\newblock Elsevier/Academic Press, Amsterdam, second edition, 2003.

\bibitem{ours}
J.~Alexopoulos and B.~de~Rijk.
\newblock Nonlinear stability of periodic wave trains in the
  {F}itzhugh-{N}agumo system against fully nonlocalized perturbations.
\newblock {\em preprint arXiv: 2409.17859}, 2024.

\bibitem{arendt}
W.~Arendt, C.~J. Batty, M.~Hieber, and F.~Neubrander.
\newblock {\em Vector-valued Laplace Transforms and Cauchy Problems - Second
  Edition}.
\newblock Springer Basel, Basel, 2013.

\bibitem{UMD}
A.~Driouich and O.~El-Mennaoui.
\newblock On the inverse {L}aplace transform for {$C_0$}-semigroups in
  {UMD}-spaces.
\newblock {\em Arch. Math. (Basel)}, 72(1):56--63, 1999.

\bibitem{nagel}
K.-J. Engel and R.~Nagel.
\newblock {\em One-Parameter Semigroups for Linear Evolution Equations -}.
\newblock Springer Science \& Business Media, Berlin Heidelberg, 2006.

\bibitem{haase}
M.~Haase.
\newblock The complex inversion formula revisited.
\newblock {\em Journal of the Australian Mathematical Society}, 84(1):73–83,
  2008.

\bibitem{AnaBanach}
T.~Hyt{\"o}nen, J.~{van Neerven}, M.~Veraar, and L.~Weis.
\newblock {\em Analysis in Banach Spaces: Volume I: Martingales and
  Littlewood-Paley Theory}.
\newblock Ergebnisse der Mathematik und ihrer Grenzgebiete. 3. Folge. Springer,
  2016.

\bibitem{pinsky_book}
M.~A. Pinsky.
\newblock {\em Introduction to {F}ourier analysis and wavelets}.
\newblock Brooks/Cole Series in Advanced Mathematics. Brooks/Cole, Pacific
  Grove, CA, 2002.

\bibitem{pinsky_paper}
M.~A. Pinsky and M.~E. Taylor.
\newblock Pointwise {F}ourier inversion: a wave equation approach.
\newblock {\em J. Fourier Anal. Appl.}, 3(6):647--703, 1997.

\end{thebibliography}
\end{document}